\documentclass[11pt]{article}
\usepackage[margin=1in]{geometry}
\pdfoutput=1
\usepackage{graphicx}
\usepackage{authblk}
\usepackage{amsmath}
\usepackage{amsfonts}
\usepackage{amsthm}
\usepackage{amssymb,xspace,float,mathtools}
\usepackage[T1]{fontenc}
\usepackage[utf8]{inputenc}
\usepackage[english]{babel}
\usepackage[round]{natbib}
\usepackage{bm}
\usepackage{algorithm}
\usepackage{algpseudocode}
\usepackage{booktabs}
\usepackage{caption}
\usepackage{hyperref}
\hypersetup{
    colorlinks=true,
    linkcolor=black,
    filecolor=magenta,
    urlcolor=blue,
    pdftitle={Liquidity Pools as Mean Field Games},
    }

\newtheorem{thm}{Theorem}[section]

\newtheorem{prop}[thm]{Proposition}
\newtheorem{cor}[thm]{Corollary}

\newtheorem*{obs*}{Remark}
\newtheorem{remark}[thm]{Remark}
\newtheorem*{remark*}{Remark}

\theoremstyle{definition}
\newtheorem{Def}[thm]{Definition}

\newcommand{\keywords}[1]{\textbf{\textit{Keywords---}} #1}

\begin{document}
\title{A New Framework for Modelling Liquidity Pools as Mean Field Games}

\author[1]{Agustín Muñoz González}

\author[2,3]{\\ Rafael Orive-Illera}

\author[1,4]{\\ Juan I. Sequeira}

\affil[1]{Departamento de Matem\'aticas, Facultad de Ciencias Exactas y Naturales, Universidad de Buenos Aires, Buenos Aires, Argentina}

\affil[2]{Departamento de Matem\'aticas, Facultad de Ciencias, Universidad Aut\'onoma de Madrid, Madrid, España}

\affil[3]{Instituto de Ciencias Matemáticas, CSIC-UAM-UCM-UC3M,  Nicolás Cabrera 13-15, Madrid, 28049, España}

\affil[4]{IMAS-CONICET, Buenos Aires, Argentina}

\date{April 2026}

\maketitle

\begin{abstract}
In this work, we present an application of the probabilistic weak formulation of mean field games (MFG) for modeling liquidity pools in a constant product automated market maker (AMM) protocol in the context of decentralized finance. Our work extends one of the most conventional applications of MFG, which is the price impact model in an order book, by incorporating an AMM instead of a traditional order book. The key structural difference is that in the AMM setting, the price is determined by the pool's reserves through a nonlinear mechanism, replacing the linear price-impact function used in classical models. Through our approach, we establish the existence of solutions to the Mean Field Game and, additionally, the existence of approximate Nash equilibria for the finite-player game. We complement the theoretical results with a comprehensive numerical study that validates the equilibrium structure: stability under perturbations, the $\varepsilon$-Nash property via unilateral deviations, finite-player convergence at propagation-of-chaos rates, and sensitivity to cost parameters and incentive targets. These results offer a new game-theoretic perspective for representing strategic behavior in AMM-based liquidity pools and open promising opportunities for future research in this emerging field.
\end{abstract}

\keywords{Mean field games, automated market makers, decentralized finance, liquidity pools, approximate Nash equilibrium, price formation}
\vspace{10pt}

\section{Introduction}
In recent years, we have witnessed growing excitement around cryptocurrencies and \emph{decentralized exchanges} (DEX) based on blockchain technology. A DEX is a type of exchange that specializes in peer-to-peer cryptocurrency and digital asset transactions. Unlike centralized exchanges (CEXs), DEXs do not require a trusted third party, or intermediary, to facilitate the exchange of cryptoassets. 

DEX trading volumes have shown remarkable growth since 2020, as illustrated in Figure \ref{Fig: Trading Vol}, reaching an early peak of over \$200 billion in May 2021 during the cryptocurrency bull run. While volumes fluctuated in the subsequent years, they remained at historically high levels, underscoring the resilience and continued adoption of decentralized finance. A new record was set in January 2025, with volumes surpassing \$400 billion, driven by strong activity across a broader range of platforms. In recent months, the market has once again surpassed previous volumes, with Uniswap, PancakeSwap, and a growing set of other DEXs contributing significantly to overall liquidity.

\begin{figure}[H]
    \centering
    \includegraphics[width=16cm]{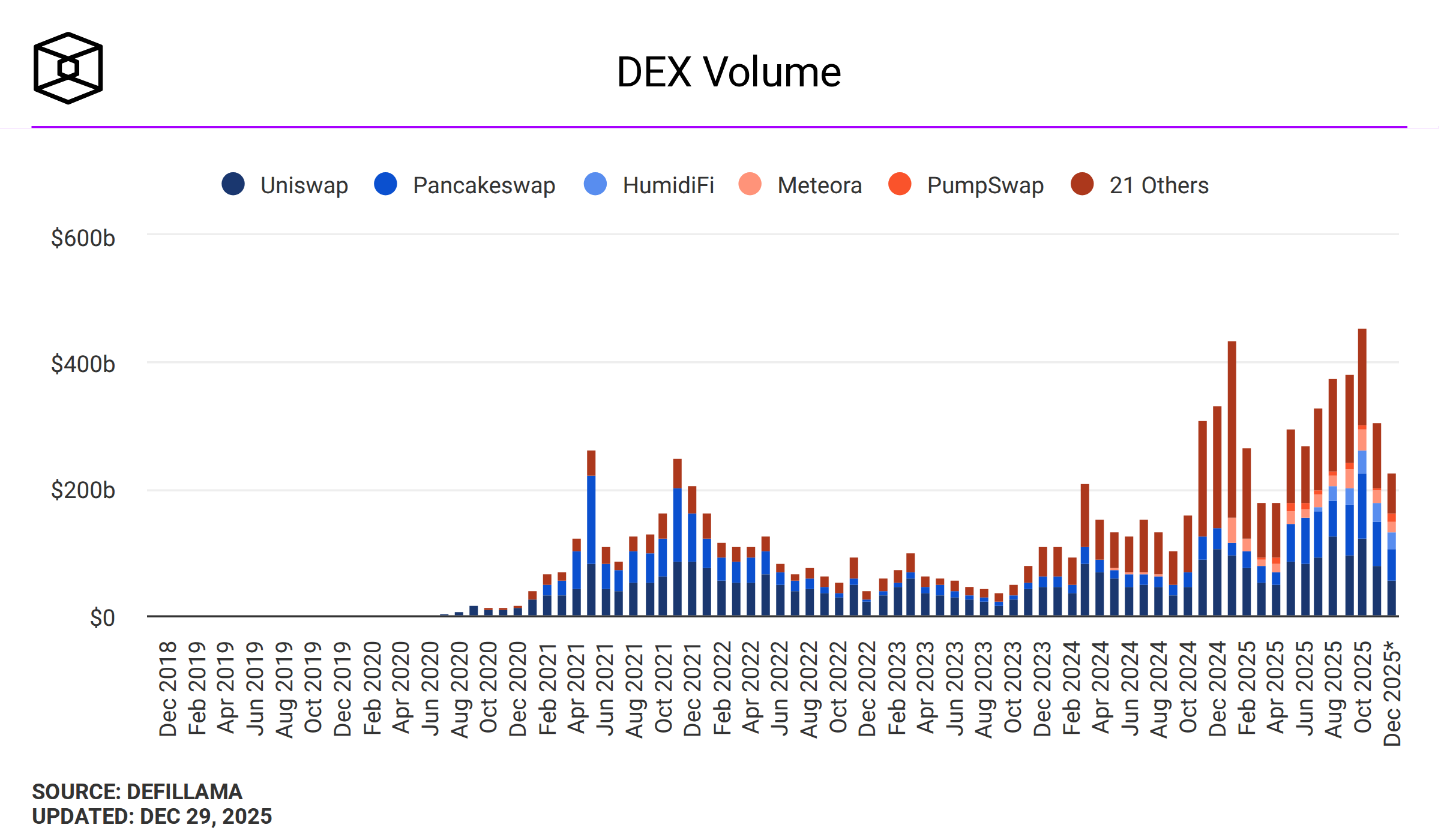}
    {\small\\ Sources: The Block, DefiLlama (\href{https://www.theblock.co/data/decentralized-finance/dex-non-custodial}{The Block}, \href{https://defillama.com/dexs}{DefiLlama})}
    \caption{Monthly trading volume on DEXs.}
    \label{Fig: Trading Vol}
\end{figure}

 In DEX markets, price formation is carried out through algorithms known as \emph{automated marked makers} (AMMs), see
 \citep{bayraktar2024DEX, mohan2021amm, xu2022SoK}. AMMs match trading orders and determine asset prices through a mathematical function called the "exchange function".  Unlike traditional market-making protocols, such as order books, AMMs do not require the physical presence of market makers or intermediaries for order execution and price setting. In an automated market, traders do not trade directly with each other but transact with assets stored in an exchange, known as "liquidity pool", see \citep{he2024AMM}. These liquidity pools are supplied by liquidity providers who wish to facilitate traders' operations.

Suppose that in a liquidity pool, exchanges of assets $X$ and $Y$ occur with initial reserves $x$ and $y$, respectively. When a trader buys an amount $\delta$ of tokens $Y$, paying a value of $p\delta$ in tokens $X$, this affects the pool's reserves as follows: the value of token $Y$ in the pool decreases ($y_0 \to y = y_0 - \delta$), while the adjusted value in tokens $X$ increases ($x_0 \to x = x_0 + p\delta$). These changes in the pool's reserves cause a transition from $(x_0, y_0)$ to $(x, y)$. In the particular case of a constant product AMM, it is required that the product of the liquidity pool's reserves, excluding trading fees, remains constant, i.e., $k = xy = x_0y_0$ for some fixed value $k$. This equation determines the execution price $p$ for this transaction \citep{angeris2019analysis,mohan2021amm,xu2022SoK}.

Mean field games (MFG) are a theoretical framework that describes strategic interaction in systems with a large number of agents who indirectly influence each other \citep{guanxing2023MFL,gueant2011applications,lachapelle2011congestion}. The MFG formalism focuses on studying interactions in scenarios where the actions of an individual agent have a negligible impact on the whole, but the collective significantly influences each agent's outcome. This approach has found applications in diverse areas such as economics, engineering, biology, and, more recently, in the analysis of decentralized financial markets. MFGs allow modeling and predicting complex dynamics within these systems, facilitating the search for equilibria that represent stable states of the system under certain decision policies.

Typically, the equilibrium in MFGs is determined by the distribution of the agents' states. However, in some economic applications, the cost faced by each agent depends on the distribution of the controls rather than the states, a framework known as "mean field games of controls" \citep{gomes2016extendedmfg, gomes2014solutionextendedmfg, graber2020variational}. Our approach aligns with this framework, as the price discovery in the AMM is inherently linked to the aggregate actions of the traders, affecting the pool's reserves and thus the price.

In a financial market, especially in the case of cryptocurrencies and digital assets, investors' and traders' decision-making can be highly strategic and influenced by a series of factors. The use of mean field game theory is justified in this context due to its ability to provide a robust framework that allows understanding how market participants interact in a decentralized environment and how their collective actions shape the overall market behavior. It was introduced independently by Lasry and Lions and by Huang, Malhamé, and Caines around 2006 \citep{huang2006largegames, lasry2007mfg}. Since then, MFG has been extended to various domains, including price formation models \citep{gomes2021mfgpriceformation, lions2016priceformation}. In traditional financial markets, these models typically treat demand as a function of price or vice versa. In contrast, our model leverages the unique structure of AMMs, where the price is a function of the liquidity reserves. This allows us to account for the direct impact of traders' actions on price formation.

Our research builds upon the price impact model used in centralized markets, where order books typically serve as the exchange mechanism \citep{carmona2015weakmfgformulation}. We adapt this model to address the control problem each trader faces when seeking to optimize their inventory while accounting for associated costs. However, unlike the traditional setup, our approach captures the price formation arising directly from the interaction of agents within the AMM, leveraging the unique characteristics of decentralized liquidity pools. We establish the existence of solutions to the mean field game and demonstrate the existence of approximate Nash equilibria, which provide a realistic extension to the classical Nash equilibrium concept by accommodating situations where exact equilibria are impractical to achieve. In this setting, an approximate Nash equilibrium is a state where no trader can significantly improve their outcome through a unilateral change in strategy, within a specified tolerance.

This paper advances the literature on AMMs and the use of bonding curves in DeFi by introducing a novel framework for applying mean field game theory to model liquidity pools. Our approach addresses the decentralized nature of AMMs by modeling the actions of "swappers" in a Uniswap v2-style liquidity pool without relying on probabilistic distributions for trader behavior. This direct modeling approach allows us to capture the strategic interactions among agents more accurately and provides new insights into equilibrium dynamics and decision-making processes in decentralized markets.

We conclude this introduction with the structure of the paper. In Section 2 we detail the inventory problem faced by a trader operating in a liquidity pool. We explore the specific dynamics and constraints that influence these operators' decision-making and conclude the section with a model for the price of the ETH asset in the liquidity pool.
In Section 3, we apply the model results to the token price in the liquidity pool, analyzing its behavior and evaluating its impact in the context of the mean field game. Here, we  examine how market participants' strategies and the dynamics of the liquidity pool influence price formation and decision-making. 
Finally, in Section 4, we present the conclusions derived from our study, summarizing the key findings, highlighting potential areas for improvement, and proposing future research that could expand and enrich the field of study. Our goal is to contribute to the understanding of cryptocurrency market dynamics and provide valuable insights for investors, traders, and academics interested in this exciting and dynamic field.

\section{Model Formulation}
\label{sec:Model Formulation}
We consider $N$ traders who trade their two tokens ETH and USDC in a liquidity pool. Let $X_t^i$ and $Y_t^i$, $i=1\dots,N$, be the inventories of ETH and USDC, respectively, for the $i$-th trader at time $t$. The dynamics of the ETH inventory for the $i$-th trader are defined as
\begin{equation}\label{dynamics of X}
  dX_t^i = \alpha_t^i dt + \sigma^i dW_t^i,  
\end{equation}
where $\alpha_t^i : [0,T] \to \mathbb{R}$ represents the trading rate, these will be the controls; $\sigma^i$ represent the respective volatilities, which for simplicity we assume are independent of $i$; and $\sigma^i dW_t^i$ represents a random change in the trader's inventory, associated with exogenous market events or inherent wallet usage, where $W_t^i$ are $N$ uncorrelated Wiener processes.

On the other hand, the dynamics of the USDC inventory are
$$ dY_t^i = -(\alpha_t^i P_t + c_p(\alpha_t^i)) dt, $$
where $P_t$ is the price at time $t$ of ETH in USDC units in the liquidity pool. The function $c_p(\alpha)$ models the cost for the trading rate $\alpha$ of trading in the pool. For this first model, we will consider a liquidity pool given by a constant product AMM without transaction costs, i.e., $c_p \equiv 0$. 

The price of ETH in USDC at time $P_t$ is derived from the ratio 
$$ P_t = \frac{Y_t}{X_t} $$
of the two asset balances $X_t$ and $Y_t$, the balances in the pool of ETH and USDC, respectively. Furthermore, in a typical constant product liquidity pool (like Uniswap), the balances satisfy the equation $X_t Y_t = k$ for all $t$, with $k>0$ constant, and we have
$$ P_t = \frac{k}{X_t^2}. $$

According to the dynamics of ETH inventories for $N$ agents with $N \in \mathbb{N}$, we model the balance of ETH in the pool as follows
$$ X_t := X_0 - \frac{1}{N} \sum_{i=1}^N \int_0^t \alpha_s^i ds, $$
where $X_0$ is the number of tokens provided by the liquidity provider at an initial time.
A detail to consider is that in a continuous-time trading model with multiple agents, it is unclear how simultaneous trades should be handled. More realistic are continuous-time discrete trading models. In the latter, it is reasonable to assume that agents never trade tokens simultaneously, given that there is a continuum of times to choose from. In general, considering the average is a common practice in Mean Field Games theory, since when $N$ tends to infinity, the average tends to the empirical distribution of the controls. 

\begin{remark*}
    In \ref{sup: S} after the proof of assumption \textbf{\hyperref[sup: S.4]{(S.4)}}, we will have that $X_t \neq 0$ for all $t \in [0,T]$. Therefore, the pool maintains at least a minimal fraction of tokens at all times
\begin{equation}\label{Xpositive}
    X_t \geq \epsilon_0, \quad \forall t\in[0,T], 
\end{equation}
for a certain $\epsilon_0>0$.
\end{remark*}

Finally, the price of ETH is modeled as a martingale plus a drift that represents the permanent price impact
$$ P_t := \frac{k}{\displaystyle \left(X_0 - \int_0^t \frac{1}{N} \sum_{i=1}^N \alpha_s^i ds \right)^2} + \sigma^0 W_t^0, $$
where $W_t^0$ is an uncorrelated Wiener process that models the inherent risks of the liquidity pool (slippage, illiquidity, oracle failures, exploits, etc.), and $\sigma^0$ is an associated volatility.

\begin{remark*}
    While the price process $(P_t)_{0\leq t\leq T}$ could, in theory, become negative due to the combined effect of trader actions and stochastic fluctuations, this does not pose a theoretical problem. Moreover, in practice, such models are typically used in optimal execution problems arising in high-frequency trading, and are applied over time horizons short enough that negative prices are highly unlikely to occur.
\end{remark*}

The price's dynamics is therefore defined by

$$ dP_t = \frac{2k\displaystyle \frac{1}{N} \sum_{i=1}^N \alpha_t^i}{\displaystyle\left(X_0 - \int_0^t \frac{1}{N} \sum_{i=1}^N \alpha_s^i ds \right)^3} dt+ \sigma^0 dW_t^0. $$

We consider the total inventory of the $i$-th trader at time $t$ with $V_t^i$ given by the sum of the USDC and ETH holdings in USDC units
$$ V_t^i = Y_t^i + X_t^i P_t. $$
By Ito's Lemma, we have
\begin{equation}
\label{Ito_inventory}
  \begin{aligned}
dV_t^i &= dY_t^i + X_t^i dP_t + dX_t^i P_t \\
&= \left[ X_t^i \frac{2k\displaystyle \frac{1}{N} \sum_{j=1}^{N} \alpha_t^j}{\displaystyle\left(X_0 - \int_0^t \frac{1}{N} \sum_{j=1}^N \alpha_s^j ds \right)^3} \right] dt + X_t^i \sigma^0 dW_t^0 + P_t \sigma^i dW_t^i.
\end{aligned}
\end{equation}

We assume the agents are risk-neutral and seek to maximize their expected profit from operating in the decentralized market. That is, the $i$-th trader seeks to maximize
\begin{equation}\label{profit}
J^i(\alpha^1, \dots, \alpha^N) = \mathbb{E}\left[V_T^i - \int_0^T h(t, X_t^i) dt - l(X_T^i) \right],    
\end{equation}
where $h : [0,T] \times \mathbb{R} \to \mathbb{R}$, $x \to h(t,x)$ represents the cost of maintaining an inventory $x$ at time $t$ and $l : \mathbb{R} \to \mathbb{R}$ represents a terminal inventory cost.

\begin{obs*}
    Note that the mean field term in our problem appears in the form of the empirical distribution of the controls 
    \begin{equation}\label{control}
        \hat{q}_t^N = \frac{1}{N} \sum_{i=1}^{N} \alpha_t^i,
    \end{equation}
     rather than being represented through the empirical distribution of the states, denoted by $\mu$ in the literature. However, in what follows, we will state the more general problem considering both distributions. The results will still hold for our model as it is a particular case of the general problem, where the involved functions are constant in the variable associated with the probability measure over the state space.
\end{obs*} 

\subsection*{Working spaces}
In what follows, we will define the spaces and functions involved in the previous mean field game to have the necessary context to use the results of \citep{carmona2015weakmfgformulation}.

\begin{itemize}
    \item Let $(\Omega,\mathcal{F},\mathbb{F}=(\mathcal{F}_t)_{0\leq t\leq T},\mathbb{P})$ be a complete filtered probability space, where the filtration $\mathbb{F}$ supporting a 1-dimensional Wiener process $\bm{W}=(W_t)_{0\leq t\leq T}$ with respect to $\mathbb{F}$ and an initial condition $\xi\in L^2(\Omega,\mathcal{F}_0,\mathbb{P};\mathbb{R})$.
    \item Let $\mathcal{C} := C([0,T]; \mathbb{R})$ be the space of continuous functions with values in $\mathbb{R}$ departing from $[0,T]$, equipped with the supremum norm $\|x\| := \sup_{s \in [0,T]} |x(s)|$.
    \item Given $\mathcal{P}(\mathbb{R})$, the space of probability functions over $\mathbb{R}$, and a measurable function $\psi : \mathbb{R} \to \mathbb{R}$, define
    \begin{align*}
        \mathcal{P}_{\psi}(\mathbb{R}) &= \left\{ \mu \in \mathcal{P}(\mathbb{R}) : \int \psi d\mu < \infty \right\}, \\
        B_{\psi}(\mathbb{R}) &= \left\{ f:\Omega\to\mathbb{R} : \sup_\omega |f(\omega)|/\psi(\omega)<\infty \right\}.
    \end{align*}
    Define $\tau_{\psi}(\mathbb{R})$ as the weakest topology on $\mathcal{P}_{\psi}(\mathbb{R})$ that makes the map $\mu \to \int f d\mu$ continuous for each $f \in B_{\psi}(\mathbb{R})$.
    \item Let the control space $A \subset \mathbb{R}$ be a bounded subset and let $\mathbb{A} = \{\alpha : [0,T] \times \Omega \to A : \text{progressively measurable}\}$ be the set of admissible controls. 
    Throughout this work, all control functions $\alpha$ are considered in a closed-loop form, meaning they depend on the current state of the trader rather than just the initial condition or time. That is, each trader adopts a control strategy of the form $\alpha_t=\hat{\alpha}(t,X_t)$, where $\hat{\alpha}$ is a deterministic function mapping states to actions. This follows the formulation in \citep{carmona2015weakmfgformulation} and ensures that traders react optimally to market conditions at each moment.
    \item Finally, let $\mathcal{P}(A)$ be the space of probability functions over $A$ along with the weak topology $\tau(A)$ that makes the map $q \to \int_{A} f dq$ continuous for each $f \in B(A)$.
\end{itemize}

\begin{remark}
Since the stochastic processes in this model are defined on a filtered probability space $(\Omega, \mathcal{F}, \mathbb{P})$, all state and control functions implicitly depend on the event $\omega \in \Omega$. However, to simplify notation, we will omit this dependence throughout the paper.
\end{remark}

Thus, the functional \eqref{profit} is written as
$$ J^i(\alpha^1, \dots, \alpha^N) = \mathbb{E}\left[\int_0^T f(t, X_t^i, \mu, \hat{q}_t^N, \alpha_t^i) dt - l(X_T^i) \right], $$
where $\mu$ is a probability measure on the state space, $\hat{q}_t^N$ denotes the empirical distribution of $\alpha_t^1, \dots, \alpha_t^N$, and the function $f : [0,T] \times \mathbb{R} \times \mathcal{P}_{\psi}(\mathbb{R}) \times \mathcal{P}(A) \times A \to \mathbb{R}$ defined by
\begin{equation}\label{f}
  f(t, x, \mu, q, \alpha) =  \frac{2k x\displaystyle  \int_A \alpha dq}{\displaystyle \left(X_0 - \int_0^t \int_A \alpha dq \right)^3} - h(t, x),   
\end{equation}
represents the deterministic part of the dynamic of the trader's inventory \eqref{Ito_inventory} in the infinite case and $h$ represents the cost of maintaining, see \eqref{profit}.

Intuitively, if $N$ is large, due to the symmetry of the model, the contribution of player $i$ to the empirical distribution of controls $\hat{q}_t^N$ is negligible, and $\hat{q}_t^N$ can be treated as fixed. We consider the limit when the number of agents \( N \) tends to infinity, which simplifies the analysis by reducing the impact of individual fluctuations and focusing on the mean dynamics. Formally, we study the behavior of
\[
J(\alpha) := \mathbb{E}\left[\int_0^T f(t, X_t, \mu_t, q_t, \alpha_t) dt - l(X_T) \right],
\]
where $(\mu_t)_{0\leq t\leq T}$ is a flow of probability measures on the state space and $(q_t)_{0\leq t\leq T}$ is a flow of measures on the control space.


\subsection*{Mean field game}
Let $\xi\in L^2(\mathbb{R},\mathcal{F}_0,\mathbb{P};\mathbb{R})$ be an initial condition and we will note the distribution probability function, usually known as the law of $\xi$, as $\mathcal{L}(\xi)$. From a practical point of view, $\mu_0=\mathcal{L}(\xi)$ should be understood as the initial distribution of the population. We state the mean field game problem given by the following structure:
\begin{enumerate}
    \item Fix a flow $\bm{\mu}=(\mu_t)_{0\leq t\leq T}$ of probability measures on the state space and a flow $\bm{q}=(q_t)_{0\leq t\leq T}$ of measures on the control space;

    \item With $\mu$ and $q$ fixed, solve the standard optimal control problem:
    \begin{equation}
        \left\{ \begin{array}{lcc}\displaystyle 
             \sup_{\alpha} \mathbb{E}\,\left[\int_0^T f(t, X_t, \mu_t, q_t, \alpha_t) dt - g(X_T, \mu_T) \right] & \text{s.t.} \\
              dX_t = b(t, x, \mu, \alpha_t) dt + \sigma dW_t, \qquad X_0 = \xi, &
             \end{array}
   \right.
   \end{equation}
   where $f$ defined in \eqref{f},  $g : \mathbb{R} \times \mathcal{P}(\mathbb{R}) \to \mathbb{R}$ and $b : [0,T] \times \mathbb{R} \times \mathcal{P}_{\psi}(\mathbb{R}) \times A \to \mathbb{R}$;
   \item Find flows $\bm{\mu}$ and $\bm{q}$ such that $\mathcal{L}(\hat{X}^{\bm{\mu}, \bm{q}}_t)=\mu_t$ and $\mathcal{L}(\alpha_t)=q_t$ for all $t\in [0,T]$, if $\hat{X}^{\bm{\mu}, \bm{q}}_t$ is a solution of the above optimal control problem.
\end{enumerate}

This should be interpreted as the optimization problem faced by a single representative player in a game consisting of an infinite number of independently and identically distributed (i.i.d.) players. The second step provides the best response of a given player interacting with the statistical distribution of the states and of the controls of the other players if these statistical distributions are assumed to be given by $\mu$ and $q$, while the third step solves a specific fixed point problem in the spirit of the search for fixed points of the best response function.

Once the existence and, perhaps, the uniqueness of a fixed point are established, the second problem is to use this fixed point to construct approximate Nash equilibrium strategies for the original finite-player game. These strategies will be constructed from the optimal control for the problem in step 2., corresponding to the choice $(\mu,q)$ being the fixed point in step 1.

\subsubsection*{Solution to the mean field game}
For each continuous measure flow $\bm{\mu}=(\mu_t)_{0\leq t\leq T} \subset \mathcal{P}_{\psi}(\mathbb{R})$ and an admissible control $\alpha \in \mathbb{A}$, we define the probability measure $\mathbb{P}^{\bm{\mu}, \alpha}$ on $( \mathbb{R},\mathcal{F}_T )$ by
$$\frac{dP^{\bm{\mu}, \alpha}}{dP} = \mathcal{E}\left(\int_0 ^{\cdot} \sigma^{-1} b(s, X_s, \mu_s, \alpha_s) dW_s\right)_T,$$
where we use the notation $\mathcal{E}$ for the Doléans-Dade exponential of a martingale. Recall that if $\mathbf{M}=(M_t)_{0\leq t\leq T}$ is a local martingale, its Doléans-Dade exponential $\mathcal{E}(\mathbf{M})$ (or the stochastic exponential of $\mathbf{M}$) is defined by the formula:
$$\mathcal{E}(\mathbf{M})_t=\exp\left(M_t-M_0-\frac{1}{2}[M,M]_t\right),$$
where $([M,M]_t)_{0\leq t\leq T}$ stands for the quadratic variation of $\mathbf{M}$.
    

By Girsanov's theorem and the boundedness of $\sigma^{-1} b$, the process $W^{\bm{\mu}, \alpha}$ defined by
$$W_t^{\bm{\mu}, \alpha} := W_t - \int_0 ^t \sigma^{-1} b(s, X_s, \mu_s, \alpha_s) ds$$
is a Wiener process under $\mathbb{P}^{\bm{\mu}, \alpha}$, and 
$$dX_t = b(t, X_t, \mu_t, \alpha_t) dt + \sigma dW_t^{\bm{\mu}, \alpha}.$$
That is, under $\mathbb{P}^{\bm{\mu}, \alpha}$, $X$ is a weak solution to the state equation. Note that $\mathbb{P}^{\bm{\mu}, \alpha}$ and $\mathbb{P}$ are equal on $\mathcal{F}_0$; in particular, the law of $X_0 = \xi$ remains $\lambda_0$. Moreover, $\xi$ and $W$ remain independent under $\mathbb{P}^{\bm{\mu}, \alpha}$.

We will note by $\mathbb{P}^{\bm{\mu}, \alpha} \circ X^{-1}=X_{*}(\mathbb{P}^{\bm{\mu}, \alpha})$ and by $\mathbb{P}^{\bm{\mu}, \alpha} \circ \alpha^{-1}=(\alpha)_{*}(\mathbb{P}^{\bm{\mu}, \alpha})$ the push-forward measures of $\mathbb{P}^{\bm{\mu}, \alpha}$ by $X$ and of $\mathbb{P}^{\bm{\mu}, \alpha}$ by $\alpha$, respectively (see for ex. \citep{bogachev2005measuretheory}). Then, $\mathbb{P}^{\bm{\mu}, \alpha} \circ X^{-1} = \mu$ is asking that the events induced by $X$ be the same as those identified by $\mu$; and $\mathbb{P}^{\bm{\mu}, \alpha} \circ \alpha^{-1} = q$ is asking that the events induced by $\alpha$ be the same as those identified by $q$.


Given flows $\bm{\mu}=(\mu_t)_{0\leq t\leq T}$ and $\bm{q}=(q_t)_{0\leq t\leq T}$ of probability measures on $\mathbb{R}$ and on $A$ resp., and given a control $\alpha \in \mathbb{A}$ we define the associated expected reward by
$$ J^{\bm{\mu}, \bm{q}}(\alpha) := \mathbb{E}^{\bm{\mu}, \alpha}\left[\int_0^T f(t, X_t, \mu_t, q_t, \alpha_t) dt + g(X_T, \mu_T) \right], $$
where $\mathbb{E}^{\bm{\mu}, \alpha}$ denotes the expectation with respect to the measure $\mathbb{P}^{\bm{\mu}, \alpha}$. Given $\bm{\mu}$ and $\bm{q}$ as before, we face a standard stochastic optimal control problem, whose value is given by
$$ V^{\bm{\mu}, \bm{q}} = \sup_{\alpha \in \mathbb{A}} J^{\bm{\mu}, \bm{q}}(\alpha). $$

Formally, we define a solution to the MFG as follows:
\begin{Def}
    We say that a pair of flows $(\bm{\mu},\bm{q})$ form a \textit{solution of the MFG} if there exists $\alpha \in \mathbb{A}$ such that $V^{\bm{\mu}, \bm{q}} = J^{\bm{\mu}, \bm{q}}(\alpha)$, $\mathbb{P}^{\bm{\mu}, \alpha} \circ X^{-1} = \mu$, and $\mathbb{P}^{\bm{\mu}, \alpha} \circ \alpha^{-1} = q$ for almost every $t$.


\end{Def}

\section{Mean Field Game solution}
We will now prove the existence of a MFG solution for the token price model in a liquidity pool with infinite traders and, additionally, find approximate Nash equilibria when the pool operates with a finite number of traders.

We assume that the initial ETH inventories \( X_0^i \) are i.i.d and have a common distribution \( \lambda_0 \in \mathcal{P}(\mathbb{R}) \) satisfying 
$$
\int_{\mathbb{R}} e^{p|x|} \lambda_0 (dx) < +\infty \qquad \forall p > 0. 
$$
Suppose \( A \subset \mathbb{R} \) is a compact interval containing the origin, \( \sigma > 0 \), the costs $h$ and $l$ defined in \eqref{profit} are measurable, and  there exists \( c_1 > 0 \) such that
\begin{equation}
    \label{Sup: Acotacion}
    |h(t,x)| + |l(x)| \leq c_1 e^{c_1 |x|} \hspace{0.3cm} \text{ for all } (t,x) \in [0,T] \times \mathbb{R}.
\end{equation}
With the notation of the previous section,  \( (t,x,\mu,q,a) \in [0,T] \times \mathbb{R} \times \mathcal{P}_{\psi}(\mathbb{R}) \times \mathcal{P}(A) \times A \), we define the following functions:
\begin{align*}
    b(t,x,\mu,a) &:= a, \qquad 
    g(\mu, x) := l(x) ,\qquad
    \psi(x) := e^{c_1 |x|},\\
    f(t,x,\mu,q,a) &:= x \frac{2k \displaystyle \int_A r dq_t (r)}{(X_0 - \displaystyle \int_0^t \int_A rdq_s(r) ds)^3} - h(t,x) .
\end{align*}
Let us see that the hypotheses of Theorem 3.5 in \citep{carmona2015weakmfgformulation} are satisfied, which guarantees the existence of a MFG solution.

\begin{cor}
\label{sup: S}
The following conditions are satisfied:
\begin{enumerate}
    \item[\textbf{\hyperref[sup: S.1]{(S.1)}}] 
    \label{sup: S.1}
    The control space \( A \) is a compact and convex subset of \( \mathbb{R} \) and the drift \( b : [0,T] \times \mathbb{R} \times \mathcal{P}_{\psi}(\mathbb{R}) \times A \to \mathbb{R} \) is continuous.
    \item[\textbf{\hyperref[sup: S.2]{(S.2)}}] 
    \label{sup: S.2}
    There exists a strong solution \( X \) to the driftless state equation
    \begin{equation}
    \label{dynamic of X without drift}
    dX_t = \sigma dW_t, \hspace{0.5cm} X_0 = \xi,
    \end{equation}
    such that \( \mathbb{E}[\psi^2 (X)] < +\infty \), \( \sigma > 0 \), and \( \sigma^{-1} b(a) \) is uniformly bounded.
    \item[\textbf{\hyperref[sup: S.3]{(S.3)}}] 
    \label{sup: S.3}
    The cost/benefit function \( f : [0,T] \times \mathbb{R} \times \mathcal{P}_{\psi}(\mathbb{R}) \times \mathcal{P}(A) \times A \to \mathbb{R} \) is such that \( (t,x) \to f(t,x,\mu,q,a) \) is progressively measurable for each \( (\mu,q,a) \) and \( a \to f (t,x,\mu,q,a) \) is continuous for each \( (t,x,\mu,q) \). The terminal cost/reward function \( g : \mathbb{R} \times \mathcal{P}_{\psi}(\mathbb{R}) \to \mathbb{R} \) is Borel measurable for each \( \mu \).
    \item[\textbf{\hyperref[sup: S.4]{(S.4)}}] 
    \label{sup: S.4}
    There exists \( c > 0 \) such that
    $$ |g(x,\mu)| + |f(t,x,\mu,q,a)| \leq c \psi(x), \quad \forall (t,x,\mu,q,a) \in [0,T] \times \mathbb{R} \times \mathcal{P}_{\psi}(\mathbb{R}) \times \mathcal{P}(A)\times A. $$
    \item[\textbf{\hyperref[sup: S.5]{(S.5)}}] 
    \label{sup: S.5}
    The function \( f \) is of the form 
    $$ f(t,x,\mu,q,a) = f_1(t,x,\mu,a) + f_2(t,x,\mu,q). $$
\end{enumerate}
\end{cor}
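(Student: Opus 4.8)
The plan is to verify the five conditions (S.1)--(S.5) one at a time, after which Theorem 3.5 of \cite{carmona2015weakmfgformulation} applies directly and yields the existence of an MFG solution. Conditions (S.1), (S.3), and (S.5) are structural and should follow immediately from the explicit form of the data. For (S.1), the hypothesis that $A$ is a compact interval containing the origin gives compactness and convexity at once, and $b(t,x,\mu,a)=a$ is the projection onto the last coordinate, hence continuous. For (S.3), I would first observe that $f$ carries no dependence on $a$, so continuity in $a$ is trivial; progressive measurability in $(t,x)$ then reduces to measurability of the evaluation maps $x\mapsto x_t$ and $x\mapsto x_0$, continuity (hence measurability) of the deterministic integral $t\mapsto\int_0^t\int_A r\,dq_s(r)\,ds$, and the assumed measurability of $h$, while Borel measurability of $g(\mu,x)=l(x_T)$ follows from that of $l$. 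For (S.5), since $f$ has no $a$-dependence we may simply take $f_1\equiv 0$ and $f_2=f$ (or absorb $-h(t,x_t)$ into $f_1$), so the additive decomposition holds trivially.

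The first substantive point is (S.2). The driftless equation has the explicit strong solution $X_t=\xi+\sigma W_t$, and $\sigma^{-1}b(a)=\sigma^{-1}a$ is uniformly bounded because $A$ is bounded. The only estimate needed is $\mathbb{E}[\psi^2(X)]<\infty$, i.e. $\mathbb{E}\big[e^{2c_1\sup_{t\le T}|\xi+\sigma W_t|}\big]<\infty$. I would establish this by bounding $\sup_t|\xi+\sigma W_t|\le|\xi|+\sigma\sup_t|W_t|$, factoring the exponential using independence of $\xi$ and $W$, invoking the assumed finiteness of all exponential moments of $\lambda_0$ for the $\xi$-factor, and using the reflection principle (under which $\sup_{t\le T}|W_t|$ has Gaussian-type tails and hence all exponential moments finite) for the $W$-factor.

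The main obstacle is (S.4), and it is here that the modeling assumption $X_t\ge\epsilon_0$ enters decisively. The bound on $g$ is immediate from \eqref{Sup: Acotacion}: $|l(x_T)|\le c_1 e^{c_1|x_T|}\le c_1\psi(x)$. For $f$, the delicate term is the price-impact drift $x_t\dfrac{2k\int_A r\,dq_t(r)}{\big(x_0-\int_0^t\int_A r\,dq_s(r)\,ds\big)^3}$. Here I would control the numerator by boundedness of $A$, writing $\big|\int_A r\,dq_t(r)\big|\le\sup_{r\in A}|r|=:M$, and control the denominator from below using the constraint that the reserve $x_0-\int_0^t\int_A r\,dq_s(r)\,ds$ stays $\ge\epsilon_0$, so its cube is bounded below by $\epsilon_0^3$. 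Combining this with the elementary inequality $|x_t|\le\|x\|\le c_1^{-1}e^{c_1\|x\|}=c_1^{-1}\psi(x)$ and with $|h(t,x_t)|\le c_1\psi(x)$ from \eqref{Sup: Acotacion} gives $|f|\le\big(2kM/(c_1\epsilon_0^3)+c_1\big)\psi(x)$, which yields (S.4) for a suitable constant $c$. I expect the only genuinely nontrivial step to be making the lower reserve bound $\epsilon_0$ rigorous and consistent with the admissible flows $q$, since without it the drift is neither bounded nor even well-defined as the reserve approaches zero.
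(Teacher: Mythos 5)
Your proposal is correct and takes essentially the same approach as the paper: verify (S.1), (S.3), (S.5) as immediate structural facts, and obtain (S.4) by bounding the numerator of the price-impact term via the bound $\max_{a\in A}|a|$ on $A$ and the denominator from below via the reserve floor $\epsilon_0$, combined with the growth bound \eqref{Sup: Acotacion}. The only differences are that for (S.2) you prove $\mathbb{E}[\psi^2(X)]<\infty$ directly (independence of $\xi$ and $W$ plus the reflection principle) where the paper just invokes Lemma 5.2 of \cite{carmona2015weakmfgformulation}, and the paper resolves the caveat you raise at the end by explicitly requiring $\max_{a\in A}|a| < (X_0-\epsilon_0)/T$ so that the reserve stays above $\epsilon_0$ for every admissible flow $q$.
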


\begin{proof}
    Let us verify each of the above statements one by one.
    \begin{enumerate}
        \item[\textbf{\hyperref[sup: S.1]{(S.1)}}] 
        The quantities of tokens traded by the agents cannot be infinite since \( A \) is bounded, i.e., there exists \( M \) such that \( |\alpha(t,\omega)| \leq M\),  \(\forall (t,\omega) \in [0,T] \times \Omega \). The control space \( A \) is a closed interval containing the origin, therefore it is compact and convex. The drift is a continuous function, as it is the identity.
        \item[\textbf{\hyperref[sup: S.2]{(S.2)}}] 
        The function \( \sigma^{-1}b(a) = \sigma^{-1} a \) is uniformly bounded since \( A \) is compact. Moreover, since \( \psi_0(x) = e^{p|x|} \), \( \lambda_0 \) is such that \( \int_{\mathbb{R}^d} \psi_0(x)^2 \lambda_0(dx) < \infty \) and \( \sigma \) is constant, the hypotheses of Lemma 5.2 of \citep{carmona2015weakmfgformulation} are verified.
    \end{enumerate}

    For \textbf{\hyperref[sup: S.3]{(S.3)}} and \textbf{\hyperref[sup: S.4]{(S.4)}}, we need to ensure the model is well-posed. Given that the set of admissible controls $A$ is compact we can take the bound used in \textbf{\hyperref[sup: S.1]{(S.1)}} as $M=\displaystyle\max_{a \in A} |a|$. Then, we have
    \begin{align*}
            X_t = X_0 - \int_0^t \int_A rdq_s(r) ds &\geq X_0 - TM, \quad  0\leq t\leq T.
    \end{align*}
    Therefore, we restrict the controls such that 
    \begin{equation}\label{condA}
        M < 
        \frac{X_0}{T}
    \end{equation}
    Then we have that there exist $\epsilon_0>0$ for which
    $$X_t > \epsilon_0, \quad \hbox{ for } 0\leq t\leq T.$$
    Using the above, we have
        \begin{align*}
           \left| \frac{2k \displaystyle\int_A r dq_t(r)}{\displaystyle\Big(X_0 - \int_0^t \int_A rdq_s(r) ds\Big)^3} \right| &\leq \frac{2k M}{\epsilon_0^3}.
       \end{align*}

    \begin{enumerate}
        \item[\textbf{\hyperref[sup: S.3]{(S.3)}}]
        The function \eqref{f} is progressively measurable as a function of \( (t,x) \) since it is well-defined and continuous, and is continuous as a function of \( a \) since it is constant. Furthermore, the terminal cost \( g(x, \mu) \) is continuous and hence Borel measurable.
        \item[\textbf{\hyperref[sup: S.4]{(S.4)}}] By equation \eqref{Sup: Acotacion} and the previous calculation, we have
        \begin{align*}
            |g(x, \mu) + f(t,x,\mu,q,a)| \leq c_1 e^{c_1 |x|} + |x| \frac{2k M}{\epsilon_0^3} \leq \tilde{c} e^{\tilde{c} |x|}
        \end{align*}
        for \( \tilde{c} = \max \left\{ c_1, \frac{2k M}{\epsilon_0^3} \right\} \). We have what we wanted for \( \psi(x) = e^{\tilde{c} |x|} \).
        \item[\textbf{\hyperref[sup: S.5]{(S.5)}}]
        Note that \( f(t,x,\mu,q,a) = f_1(t,x,\mu,a) + f_2(t,x,\mu,q) \)
        where \( f_1(t,x,\mu,a) = 0 \) and 
        $$f_2(t,x,\mu,q) = x \frac{\displaystyle 2k \int_A r dq_t(r)}{\displaystyle \Big(X_0 - \int_0^t \int_A rdq_s(r) ds\Big)^3} - h(t,x).$$
    \end{enumerate}
\end{proof}

Some additional conditions are needed for the existence results. The Hamiltonian \( h : [0,T] \times \mathbb{R} \times \mathcal{P}_{\psi}(\mathbb{R}) \times \mathcal{P}(A) \times \mathbb{R}\times A \to \mathbb{R} \), the maximized Hamiltonian \( H : [0,T] \times \mathbb{R} \times \mathcal{P}_{\psi}(\mathbb{R}) \times \mathcal{P}(A) \times \mathbb{R} \to \mathbb{R} \), and the set where the supremum is attained are defined by
\begin{align*}
    h(t,x,\mu,q,z,a) &:= f(t,x,\mu,q,a) + z \sigma^{-1}b(t,x,\mu,a), \\
    H(t,x,\mu,q,z) &:= \sup_{a \in A} h(t,x,\mu,q,z,a), \\
    A(t,x,\mu,q,z) &:= \{ a \in A : h(t,x,\mu,q,z,a) = H(t,x,\mu,q,z) \},
\end{align*}
respectively. Note that by assumption \textbf{(S.5)}, \( A(t,x,\mu,q,z) \) does not depend on \( q \), so we will use the notation \( A(t,x,\mu,z) \). Furthermore, by assumptions \textbf{(S.1)} and \textbf{(S.3)}, the compactness of \( A \) and the continuity of \( h \) in the variable \( a \), we have that \( A(t,x,\mu,z) \) is non-empty.


\begin{cor}
The following condition holds:
\begin{description}
  \item[\textbf{\hyperref[sup: C]{(C)}}]
  \label{sup: C}
 For each \( (t,x,\mu,z) \), the set \( A(t,x,\mu,z) \) is convex.
\end{description}
\end{cor}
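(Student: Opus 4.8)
The plan is to exploit the very specific structure of the Hamiltonian in our application, which renders it \emph{affine} in the control variable $a$. Recall that by assumption \textbf{(S.5)} we have written $f = f_1 + f_2$ with $f_1(t,x,\mu,a) = 0$, so that $f(t,x,\mu,q,a) = f_2(t,x,\mu,q)$ carries no dependence on $a$ whatsoever. Combined with the fact that in our model the drift is the identity, $b(t,x,\mu,a) = a$, the Hamiltonian collapses to
$$ h(t,x,\mu,q,z,a) = f_2(t,x,\mu,q) + z\,\sigma^{-1} a, $$
which, for each fixed $(t,x,\mu,q,z)$, is an affine function of $a \in A$. First I would record this reduction explicitly, taking care to distinguish the Hamiltonian $h$ from the inventory-cost function $h(t,x)$ buried inside $f_2$, since the two unfortunately share a symbol.

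Next I would observe that, because $\sigma > 0$ and the $f_2$-term is constant in $a$, maximizing $h(t,x,\mu,q,z,\cdot)$ over $A$ is equivalent to maximizing the linear map $a \mapsto (z\sigma^{-1})\,a$ over the compact convex interval $A$; this also re-confirms that the maximizer set does not depend on $q$, consistent with the notation $A(t,x,\mu,z)$ already introduced. The core step is then the elementary fact that the set of maximizers of a concave (here affine) function over a convex set is itself convex. Concretely, I would take $a_1, a_2 \in A(t,x,\mu,z)$, so that $h(t,x,\mu,q,z,a_1) = h(t,x,\mu,q,z,a_2) = H(t,x,\mu,q,z)$, fix $\lambda \in [0,1]$, and set $a_\lambda := \lambda a_1 + (1-\lambda) a_2$. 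Since $A$ is convex (by \textbf{(S.1)}) we have $a_\lambda \in A$, and since $h$ is affine in its last argument,
$$ h(t,x,\mu,q,z,a_\lambda) = \lambda\, h(t,x,\mu,q,z,a_1) + (1-\lambda)\, h(t,x,\mu,q,z,a_2) = H(t,x,\mu,q,z), $$
so that $a_\lambda \in A(t,x,\mu,z)$. This establishes convexity of the maximizer set.

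I do not expect a genuine obstacle here: the result is immediate once the affine structure is laid bare. The only point requiring care is justifying the affineness claim itself, namely tracing that the entire $a$-dependence of $h$ enters through the linear drift term $z\sigma^{-1}b(t,x,\mu,a) = z\sigma^{-1}a$ and not through $f$, which is where the notational clash between the two $h$'s could otherwise cause confusion. As a sanity check one may also describe $A(t,x,\mu,z)$ explicitly: when $z \neq 0$ it is the single endpoint of $A$ in the direction of $\mathrm{sgn}(z)$, and when $z = 0$ it is all of $A$, each of which is trivially convex, recovering the same conclusion.
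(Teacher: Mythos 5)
Your proof is correct and follows essentially the same route as the paper: the paper likewise observes that convexity of \(A(t,x,\mu,z)\) follows because \(b(a)=a\) is affine in \(a\) and \(f\) is constant (hence concave) in \(a\), so the Hamiltonian is affine in the control. You merely spell out the elementary maximizer-set argument that the paper leaves implicit.
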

\begin{proof}
    It holds whenever \( b \) is affine as a function of \( a \) and \( f \) is also concave as a function of \( a \). The first condition follows from \( b(a) = a \); the second because it is constant.

    Let \( a,b \in A(t,x,\mu,q,z) \) and \( \lambda \in [0,1] \). We need to prove that \( c := (1-\lambda) a + \lambda b \in A(t,x,\mu,q,z) \).
 Since \( a,b \in A(t,x,\mu,q,z) \), we have that  
    \begin{align*}
        h(t,x,\mu,q,z,a) &= H(t,x,\mu,q,z) = h(t,x,\mu,q,z,b).
    \end{align*}
    Then,  
    \begin{align*}
        h(t,x,\mu,q,z,c)
        &= f(t,x,\mu,q,((1-\lambda) a + \lambda b)) + z \sigma^{-1}((1-\lambda) a + \lambda b)\\
        &\geq (1-\lambda) f(t,x,\mu,q,a) + \lambda f(t,x,\mu,q,b) + z \sigma^{-1}((1-\lambda) a + \lambda b)\\
        &= (1-\lambda) (f(t,x,\mu,q,a) + z \sigma^{-1} a) + \lambda (f(t,x,\mu,q,b) + z \sigma^{-1} b)\\
        &= (1-\lambda) h(t,x,\mu,q,z,a) + \lambda h(t,x,\mu,q,z,b)\\
        &= (1-\lambda) h(t,x,\mu,q,z,a) + \lambda h(t,x,\mu,q,z,a)\\
        &= h(t,x,\mu,q,z,a) = H(t,x,\mu,q,z).
    \end{align*}
    Since \( a \) was already a maximizer, we must have the equality:  
    \[
    h(t,x,\mu,q,z,c) = h(t,x,\mu,q,z,a) = H(t,x,\mu,q,z).
    \]
\end{proof}

It will be useful to have a notation for the law without drift and the set of equivalent laws,
\begin{align*}
    \mathcal{X} &:= \mathbb{P} \circ X^{-1} \in \mathcal{P}_{\psi}(\mathbb{R}), \\
    \mathcal{P}_\mathcal{X} &:= \{ \mu \in \mathcal{P}_{\psi}(\mathbb{R}) : \mu \sim \mathcal{X} \}.
\end{align*}

\begin{cor}
The following holds:
\begin{description}
  \item[\textbf{\hyperref[sup: E]{(E)}}]
  \label{sup: E}
    For each \( (t,x) \in [0,T] \times \mathbb{R} \), the following maps are sequentially continuous, using \( \tau_{\psi}(\mathbb{R}) \) on \( \mathcal{P}_\mathcal{X} \) and the weak topology on \( \mathcal{P}(A) \):
\begin{align*}
    \mathcal{P}_\mathcal{X} \times A &\ni (\mu, a) \mapsto b(t,x,\mu,a), \\
    \mathcal{P}_\mathcal{X} \times \mathcal{P}(A) \times A &\ni (\mu,q,a) \mapsto f(t,x,\mu,q,a), \\
    \mathcal{P}_\mathcal{X} &\ni \mu \to g(x,\mu).
\end{align*}
\end{description}
\end{cor}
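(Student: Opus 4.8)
The plan is to reduce everything to a single genuine continuity statement, since the structure of $b$, $f$, and $g$ makes most of the dependencies trivial. First I would observe that $b(t,x,\mu,a)=a$ is independent of $\mu$ and equals the (continuous) identity in $a$, so the map $(\mu,a)\mapsto b(t,x,\mu,a)$ is sequentially continuous at once; similarly $g(x,\mu)=l(x_T)$ is independent of $\mu$, so $\mu\mapsto g(x,\mu)$ is constant and hence trivially continuous in the $\tau_\psi(\mathcal{C})$ topology. This leaves only the map $(\mu,q,a)\mapsto f(t,x,\mu,q,a)$.

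Here I would use that $f$ is constant in $a$ (already noted in the verification of \textbf{(S.3)}) and also independent of $\mu$, so that joint sequential continuity in $(\mu,q,a)$ collapses to continuity in $q$ alone under weak convergence. Writing $f = x_t\,\dfrac{2k\int_A r\,dq_t(r)}{(x_0-\int_0^t\int_A r\,dq_s(r)\,ds)^3} - h(t,x_t)$, I would argue in three steps. (i) Since $r\mapsto r$ is continuous and bounded on the compact set $A$, weak convergence $q^n\to q$ gives $\int_A r\,dq^n_t(r)\to\int_A r\,dq_t(r)$, so the numerator converges. (ii) For the denominator, the same weak convergence gives $\int_A r\,dq^n_s(r)\to\int_A r\,dq_s(r)$ for each $s$, and since these means are bounded uniformly by $M:=\max_{a\in A}|a|$, dominated convergence yields $\int_0^t\int_A r\,dq^n_s\,ds\to\int_0^t\int_A r\,dq_s\,ds$, whence the denominator converges. (iii) By the Remark preceding \textbf{(S.3)} the denominator is bounded below by $\epsilon_0^3>0$ uniformly in $q$, so the quotient depends continuously on its converging pieces and $f(t,x,\mu,q^n,a)\to f(t,x,\mu,q,a)$.

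The hard part will not be analytic but notational: as written, $f$ depends on the entire control flow $(q_s)_{s\le t}$ through the time integral in the denominator, not merely on the single slice $q\in\mathcal{P}(A)$ appearing in the signature of \textbf{(E)}. Making step (ii) rigorous therefore requires fixing the topology under which the flow converges, namely weak convergence of $q_s$ for almost every $s$, with the uniform bound $M$ supplying the dominating function for the time integral. Once this convention is pinned down, the compactness of $A$ (giving boundedness of the means and of the dominating function) and the strict lower bound $\epsilon_0^3$ on the denominator (coming from the non-empty-pool assumption) do all the work, and the remaining estimates are routine.
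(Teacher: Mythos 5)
Your proposal is correct and follows essentially the same route as the paper's own (much terser) proof: $b$ and $g$ are handled by noting they are the identity in $a$ and constant in $\mu$ respectively, and the continuity of $f$ is reduced to the continuity of $q\mapsto\int_A r\,dq(r)$ under weak convergence, with the compactness of $A$ and the lower bound $\epsilon_0^3$ on the denominator doing the work. Your added observation that the formula for $f$ actually depends on the whole flow $(q_s)_{s\le t}$ rather than on the single slice $q\in\mathcal{P}(A)$ appearing in the signature of \textbf{(E)} is a genuine point that the paper's proof silently glosses over, and your dominated-convergence treatment of the time integral is the right way to close that gap.
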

\begin{proof} 
    For each \( (t,x) \in [0,T] \times \mathbb{R} \), we have that \( a \mapsto b(\mu, a) = a \) is sequentially continuous since it is continuous. Additionally, given that \( q \mapsto \int_A r dq(r) \) is continuous, the map \( (\mu,q,a) \mapsto f(t,x,\mu,q,a) \) is also continuous. Finally, \( g \) is constant with respect to \( \mu \).
\end{proof}
\begin{prop}
Let's recall our price impact model in a liquidity pool with infinite traders with state dynamic
$$dX_t = \alpha_t dt + \sigma dW_t,  $$
and associated expected reward
$$ J^{\mu, q}(\alpha) = \mathbb{E}^{\mu, \alpha}\left[\int_0^T f(t, X_t, \mu_t, q_t, \alpha_t) dt + g(X_T, \mu_T) \right], $$
for $f,g$ given at the beginning of the section. There exists a solution to the MFG induced by this model.
\end{prop}
\begin{proof}
    By Theorem 3.5 of \citep{carmona2015weakmfgformulation}, since conditions \textbf{\hyperref[sup: C]{(C)}} and \textbf{\hyperref[sup: E]{(E)}} are satisfied, there exists a solution to the MFG.
\end{proof}

The failure to satisfy the necessary assumptions for uniqueness results (Theorem 3.8, \citep{carmona2015weakmfgformulation}) does not imply its inexistence; rather, it indicates that our model does not fit these specific hypotheses. It would be necessary to adjust the existing model or explore new results to address this discrepancy.

\section{Approximation to Nash}

In this section, we shift our focus to the finite-player version of the game and study the convergence of the system towards a Nash equilibrium as the number of players increases. Specifically, we aim to show that the solution obtained from the mean field game provides a foundation for constructing approximate Nash equilibria for the original $N$-player game. 

Let's recall that the empirical measure map \( e_n : \Omega^n \to \mathcal{P}(\Omega) \) is given by
$$ e_n(\omega_1,...,\omega_n) = \frac{1}{n} \sum_{j=1}^n \delta_{\omega_j}. $$

Given the measurable spaces \( E \) and \( F \), we say that \( f : \mathcal{P}(\Omega) \times E \to F \) is \textit{empirically measurable} if
$$ (x,y) \in \Omega^n \times E \to f(e_n(x),y) \in F $$
is jointly measurable for all \( n \geq 1 \).

\begin{cor}
The following hypotheses are satisfied:
\begin{enumerate}
    \item[\textbf{\hyperref[sup: F.1]{(F.1)}}]
    \label{sup: F.1}
    The drift \( b = b(t,x,a) \) has no mean field term.
    \item[\textbf{\hyperref[sup: F.2]{(F.2)}}]
    \label{sup: F.2}
    The functions \( b \), \( f \), and \( g \) are empirically measurable using the progressive \(\sigma\)-field on \( [0,T] \times \mathbb{R} \) and the Borel \(\sigma\)-fields elsewhere.
    \item[\textbf{\hyperref[sup: F.3]{(F.3)}}]
    \label{sup: F.3}
    For each \( (t,x) \), the following functions are continuous at each point satisfying \( \mu \sim \mathcal{X} \):
    $$ \mathcal{P}_{\psi}(\mathbb{R}) \times \mathcal{P}(A) \times A \ni (\mu,q,a) \mapsto f(t,x,\mu,q,a), $$
    $$ \mathcal{P}_{\psi}(\mathbb{R}) \ni \mu \to g(x,\mu). $$
    \item[\textbf{\hyperref[sup: F.4]{(F.4)}}]
    \label{sup: F.4}
    There exists \( c > 0 \) such that, for all \( (t,x,q,a) \),
    $$ |g(x,\mu)| + |f(t,x,\mu,q,a)| \leq c \left( \psi(x) + \int \psi d\mu \right). $$
\end{enumerate}
\end{cor}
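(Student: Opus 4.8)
The plan is to verify the four conditions in turn, leaning heavily on the estimates and continuity arguments already established for \textbf{(S)} and \textbf{(E)}, and exploiting the one structural feature that trivializes most of the work: in our model none of $b$, $f$, or $g$ depends on the state-measure argument $\mu$. Indeed $b(t,x,\mu,a)=a$, the integrand $f$ depends on $(\mu,q,a)$ only through $q$, and $g(\mu,x)=l(x_T)$ depends only on $x$. I would state this observation up front, since it is what makes each of the hypotheses nearly automatic.

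For \textbf{(F.1)} I would simply note that $b(t,x,\mu,a)=a$ is a function of $a$ alone, so the drift carries no mean field term. For \textbf{(F.2)}, the point is that because $b$, $f$, and $g$ are independent of $\mu$, composition with the empirical measure map $e_n$ is vacuous: for each $n$ the map $(x,y)\in\Omega^n\times E \mapsto f(e_n(x),y)$ does not actually depend on $e_n(x)$, so joint measurability reduces to the measurability already proved in \textbf{(S.3)} — progressive measurability of $(t,x)\mapsto f$ and Borel measurability in the remaining arguments. The same reasoning applies verbatim to $b$ and $g$.

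For \textbf{(F.3)} I would reuse the argument for \textbf{(E)}: $g$ is constant in $\mu$, and $f$ enters $(\mu,q,a)$ only through the continuous map $q\mapsto\int_A r\,dq(r)$. Since the remark preceding \textbf{(S.3)} guarantees the denominator $X_0-\int_0^t\int_A r\,dq_s(r)\,ds$ is bounded below by $\epsilon_0>0$, the full expression is continuous in $(\mu,q,a)$ everywhere, in particular at every point with $\mu\sim\mathcal{X}$. For \textbf{(F.4)} I would derive the bound directly from \textbf{(S.4)}, where we already obtained $|g(x,\mu)|+|f(t,x,\mu,q,a)|\leq \tilde{c}\,e^{\tilde{c}|x|}=\tilde{c}\,\psi(x)$; since $\psi>0$ forces $\int\psi\,d\mu\geq 0$, this immediately yields $|g|+|f|\leq \tilde{c}\bigl(\psi(x)+\int\psi\,d\mu\bigr)$ with the same constant $\tilde{c}$, which is exactly \textbf{(F.4)}.

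The only step requiring genuine care is \textbf{(F.2)}, since empirical measurability (Definition \ref{empirically measurable}) is the one new ingredient introduced in this section; one must check that the reduction to the measurability of \textbf{(S.3)} is legitimate rather than merely plausible. In our setting this is clean precisely because of the absence of $\mu$-dependence, so I expect the verification to be short. I would flag, however, that in a variant of the model where $f$ or $g$ genuinely depended on the empirical distribution of states, establishing joint measurability of $(x,y)\mapsto f(e_n(x),y)$ would be the real technical obstacle, and the argument here would not transfer without further work.
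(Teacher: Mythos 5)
Your proposal is correct and follows essentially the same route as the paper: all four conditions are reduced to the absence of $\mu$-dependence in $b$, $f$, $g$ together with the measurability, continuity, and exponential bounds already established for \textbf{(S.3)}, \textbf{(S.4)}, and \textbf{(E)}. Your treatment of \textbf{(F.2)} is in fact slightly more self-contained than the paper's (which defers partly to the price impact model of the reference), but the underlying argument is the same.
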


\begin{proof}
    Note that \textbf{\hyperref[sup: F.1]{(F.1)}} is clear. Let \( E = [0,T] \times \mathbb{R} \times \mathcal{P}(A) \times A \) and let \( (\vec{x},t,x,q,a) \in \mathbb{R}^n \times E \). We want to see that
    \begin{align*}
        (\vec{x},t,x,a) &\mapsto b(t,x,e_n(\vec{x}),a), \\
        (\vec{x},t,x,q,a) &\mapsto f(t,x,e_n(\vec{x}),q,a), \\
        (\vec{x},x) &\mapsto g(x, e_n(\vec{x}))
    \end{align*}
    are measurable, where
    \( \vec{x} \in \mathbb{R}^n \mapsto e_n(\vec{x}) := \displaystyle\frac{1}{n} \sum_{i=1}^n \delta_{x_i} \in \mathcal{P}_{\psi}(\mathbb{R}) \).

    Since \( b,f \) and \( g \) in our case do not depend on the state law coordinate and are continuous, particularly measurable, and we are considering the progressive \(\sigma\)-fields on \( [0,T] \times \mathbb{R} \) and the Borel \(\sigma\)-fields on the remaining coordinates, the composition with \( e_n \) results in a measurable function. 
    Therefore, \textbf{\hyperref[sup: F.2]{(F.2)}} is satisfied.
    
    Furthermore, since \( q \to \int_A r dq(r) \) is continuous, \textbf{\hyperref[sup: F.3]{(F.3)}} follows.
    
    Finally, using that \( \int \psi d\mu \) is positive if we consider \( \psi(x) = e^{c|x|} \) and performing the same computation as in \textbf{\hyperref[sup: S.4]{(S.4)}}, we see that \textbf{\hyperref[sup: F.4]{(F.4)}} is also satisfied.
\end{proof}

\begin{prop}
There exists an approximate Nash equilibrium for the game with finite players in the sense that there exists a sequence \( \varepsilon_n \geq 0 \)
with \( \varepsilon_n \to 0 \) such that, for \( 1 \leq i \leq n \) and \( \beta \in \mathbb{A}_n \),
$$ J_{n,i}(\alpha^1,...,\alpha^{i-1},\beta,\alpha^{i+1},...,\alpha^n) \leq J_{n,i}(\alpha^1,...,\alpha^n) + \varepsilon_n. $$
\end{prop}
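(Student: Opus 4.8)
The plan is to apply the approximate Nash equilibrium theorem of \cite{carmona2015weakmfgformulation}, the companion result to Th. 3.5 invoked above, whose hypotheses (F.1)--(F.4) we verified in the preceding corollary. Recall that the previous proposition produced a solution $(\mu,q)$ of the MFG together with an optimal control $\hat{\alpha} \in \mathbb{A}$ attaining $V^{\mu,q} = J^{\mu,q}(\hat{\alpha})$ and satisfying the fixed-point relations $\mathbb{P}^{\mu,\hat{\alpha}} \circ X^{-1} = \mu$ and $\mathbb{P}^{\mu,\hat{\alpha}} \circ \hat{\alpha}_t^{-1} = q_t$. The idea is to transfer this single-player optimizer into the finite game: each of the $n$ players applies the same distributed control $\hat{\alpha}$ to their own initial condition and driving noise $W^i$, producing symmetric, exchangeable strategies $(\alpha^1,\dots,\alpha^n) \in \mathbb{A}_n$. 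Since $\hat{\alpha}$ is adapted only to each player's own data and $A$ is bounded, these strategies are admissible.

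Next I would establish the two convergence ingredients that drive the estimate. First, a propagation-of-chaos argument: under the constructed symmetric strategies the players are i.i.d., so by the law of large numbers the empirical measure of the state paths and the flow of empirical control distributions $\hat{q}_t^n = \frac{1}{n}\sum_{j=1}^n \delta_{\alpha_t^j}$ converge, as $n \to \infty$, to the MFG solution $(\mu,q)$ in $\tau_\psi(\mathcal{C})$ and in the weak topology on $\mathcal{P}(A)$, respectively. Second, a stability estimate: using the sequential continuity of $f$ and $g$ in $(\mu,q)$ from (E) and (F.3), together with the growth bound (F.4) and the integrability $\int \psi\,d\mu < \infty$ and $\mathbb{E}[\psi^2(X)] < \infty$ from (S.2) to secure uniform integrability under the change of measure $\mathbb{P}^{\mu,\hat{\alpha}}$, the costs $J_{n,i}$ evaluated along these strategies converge to the common value $J^{\mu,q}(\hat{\alpha}) = V^{\mu,q}$.

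The decisive step is controlling a unilateral deviation. Suppose player $i$ switches to an arbitrary $\beta \in \mathbb{A}_n$ while the others keep $\hat{\alpha}$. Because player $i$ contributes only a $1/n$ fraction to the empirical control distribution, the mean field term seen by the remaining players, and hence the price process entering $f$, is perturbed by at most $O(1/n)$; here the uniform bound from the Remark, $\left|\frac{2k\int_A r\,dq_t(r)}{(X_0 - \int_0^t\!\int_A r\,dq_s(r)\,ds)^3}\right| \le \frac{2k\,\max_{a\in A}|a|}{\epsilon_0^3}$, together with the compactness of $A$ and the lower bound $X_t \ge \epsilon_0$, makes this perturbation uniform and keeps the denominator away from zero. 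Consequently the deviating player faces essentially the fixed environment $(\mu,q)$ against which $\hat{\alpha}$ was already optimal, so $J_{n,i}(\alpha^1,\dots,\beta,\dots,\alpha^n) \le V^{\mu,q} + o(1) = J_{n,i}(\alpha^1,\dots,\alpha^n) + \varepsilon_n$ with $\varepsilon_n \to 0$.

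I expect the main obstacle to be precisely this deviation estimate: one must show the negligible influence of a single player on the empirical measure is \emph{uniform} over all admissible deviations $\beta$, not merely for each fixed $\beta$. This is exactly where the boundedness of $A$ and the pool constraint $X_t \ge \epsilon_0$ are essential, as they keep $\sigma^{-1}b$ and the mean field coupling uniformly bounded and prevent the denominator in $f$ from degenerating. With (F.1)--(F.4) verified, this is the content of the approximate Nash theorem of \cite{carmona2015weakmfgformulation}, which we may therefore apply to conclude.
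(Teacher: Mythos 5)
Your proposal is correct and follows essentially the same route as the paper: both verify hypotheses \textbf{(F.1)}--\textbf{(F.4)}, take the MFG solution from the preceding proposition, distribute its optimal control $\hat{\alpha}$ to the $n$ players, and conclude by invoking the approximate Nash equilibrium theorem (Th.\ 5.1) of \cite{carmona2015weakmfgformulation}. The additional material you include on propagation of chaos and the uniform deviation estimate is a sketch of the internals of that cited theorem rather than a different argument, though it does correctly identify where the boundedness of $A$ and the constraint $X_t \geq \epsilon_0$ are needed for the hypotheses to hold in this model.
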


\begin{proof}
    By Theorem 5.1 of \citep{carmona2015weakmfgformulation}, given \( (\hat{\mu},\hat{q}) \) a solution to the MFG with corresponding closed-loop control $\alpha_t=\hat{\alpha}(t,x)$ (see Remark 3.2 of \citep{carmona2015weakmfgformulation}) that exists by the previous result, the strategies \( \alpha_t^i := \hat{\alpha}(t,X^i_t) \) form an approximate Nash equilibrium for the finite player game.
\end{proof}

\section{Numerical Analysis and Practical Implications}
\label{sec:numerics}
This section presents a sequence of numerical experiments illustrating the behavior of the Mean Field Game approximation for the AMM liquidation model.

We perform a detailed numerical analysis to explore the behavior of the optimal trading policy in our mean field game framework with a constant product AMM. Specifically, we are interested in whether the traders' optimal control $\alpha$ can effectively move the market in the presence of the strong nonlinear price impact.

The instantaneous reward function is given by:
\[
f(t,x,\mu,q,\alpha) = \frac{2kx \int_A \alpha\,dq}{\left(X_0 - \int_0^t \int_A \alpha\,dq\right)^3} - h(t,x),
\]
where the cubic denominator reflects the AMM's liquidity pool depletion mechanism. This denominator dominates the impact of the control $\alpha$, potentially flattening the reward landscape.


Our objectives are threefold: 
\begin{enumerate}
    \item to show how the finite--player system approximates the MFG equilibrium; 
    \item to explore the stability of the equilibrium under small perturbations; and 
    \item to evaluate how different modeling choices (action set, cost structure, and market depth) affect the qualitative shape of the equilibrium control.
\end{enumerate}

\begin{obs*}
To isolate the fixed--point structure and validate the numerical MFG pipeline, we run throughout this section a tractable aggregate--impact proxy in which the population effect enters through the empirical mean control. This specification preserves the mean field coupling while avoiding additional nonlinear feedback effects induced by the AMM reserve dynamics. Accordingly, the experiments below should be read as algorithmic and game--theoretic sanity checks (stability, $\varepsilon$--Nash, and finite--$N$ convergence) rather than a calibrated quantitative study of AMM price impact. A fully endogenous AMM--feedback (``realistic impact'') implementation is left for future work.
\end{obs*}

All experiments use the fixed--point solver described in 
Section~\ref{sec:algorithm}, and all simulations are performed in the
''analytic'' mode (aggregate--impact).

\subsection*{Experiment 0: the frictionless model is degenerate}

We begin with the frictionless version of the model, where all costs are 
switched off:
\[
\lambda = \eta_{\mathrm{term}} = \rho = 0,
\qquad
A = [-3,3].
\]

When $\lambda=\eta=\rho=0$, the one--step value difference becomes nearly flat in the control variable, so the best--response map provides no robust selection mechanism.
As a result, the fixed point computation is \emph{ill-conditioned}: the solver converges to an arbitrary admissible profile (typically close to the midpoint of $A$ in symmetric settings), driven mainly by discretization and numerical asymmetries rather than by economic forces.
This behavior is consistent with the fact that, in the absence of inventory penalties, terminal incentives, or control costs, the control problem admits a \emph{continuum of near-indifferent solutions}, hence equilibrium selection is not identifiable without regularization.

This experiment motivates the introduction of frictions, which regularize the model and 
produce an interior, economically meaningful equilibrium.  
In the following, we therefore focus on a parameter regime with nonzero inventory 
and control costs.

The numerical outcomes are summarized in
Table~\ref{tab:exp0}, which reports equilibrium magnitude, stability indicators, and finite--player deviations across different admissible action sets.

\subsection*{Baseline scenario and stability checks}
\label{sec:baseline}

We now consider the ''regularized'' model that serves as our baseline example.
The parameters are:
\[
X_0 = 10,\quad 
Y_0 = 10\,000,\quad 
N = 50,\quad
dt = 0.02,\quad T = 1,
\]
\[
\lambda = 2,\qquad
\eta_{\mathrm{term}} = 0.3,\qquad
\rho = 5,\qquad
A = [-1.5,1.5] \text{ (61 points)},\qquad
\sigma_i = 0.02.
\]
In this regime the optimal one--step control 
\(
a^\star(x) \approx -\frac{\eta_{\mathrm{term}}}{\rho}(x - x^\star)
\)
lies inside the admissible action set.  
This eliminates the bang--bang behavior observed earlier and results in a 
smooth, interior equilibrium $q^\star$.

We report four diagnostics.

\paragraph*{1. Baseline summary.}
Figure~\ref{fig:qstar_precio_baseline} illustrates the equilibrium structure in the baseline scenario. The aggregate equilibrium control satisfies $q^*(t) < 0$ for all $t \in [0, T]$, with values in the range $[-0.61, -0.57]$; this reflects that, under the calibrated inventory costs and terminal penalty, the optimal coordinated strategy of the agents consists of selling ETH steadily throughout the entire horizon. The price trajectory $P(t)$ induced by $q^*$ is strictly decreasing, from $P_0 \approx 998$ to $P_T \approx 892$, showing that the net selling flow is transmitted monotonically to the AMM through the constant product mechanism. The uniform negativity of $q^*(t)$ and the strict decrease of $P(t)$ are intrinsically linked: the MFG equilibrium reflects a coordinated liquidation pressure that the AMM absorbs through a monotone downward price adjustment, without pool depletion ($L(t) \geq 10.01$ for all $t$).

Table~\ref{tab:baseline} summarizes the equilibrium magnitude, stability indicators, and finite-player deviations across the three action set configurations.
All unilateral improvement values satisfy $\max_i \Delta J_i \approx 0$, confirming the $\varepsilon$--Nash character of the baseline equilibrium across all tested action sets.

\paragraph*{2. Stability to perturbations.}
For small perturbations $q^\star + \varepsilon \delta q$ we compute the 
finite--difference sensitivity
\[
\gamma(\varepsilon)
=
\frac{\|\,\mathcal{F}(q^\star+\varepsilon \delta q) - q^\star\,\|}
{\varepsilon},
\]
where $\mathcal{F}$ denotes one iteration of the fixed--point map.
Fig.~\ref{fig:perturb} shows $\gamma(\varepsilon)$ decreasing steadily as 
$\varepsilon\downarrow 0$, indicating that $q^\star$ is a stable fixed point. We evaluate $\gamma$ at six perturbation magnitudes $\epsilon\in\{10^{-3}, 3*10^{-3}, 5*10^{-3}, 10^{-2}, 3*10^{-2}, 5*10^{-2}\}$ and report the mean $\overline{\gamma}$ in the summary tables.

\paragraph*{3. Unilateral deviations ($\epsilon$--Nash).}
Using the ''frozen policy'' mechanism developed in Section~\ref{sec:algorithm}, 
we recompute the objective of each trader when only that trader is allowed  to deviate optimally while all others remain at their baseline trajectories. The resulting histogram of improvements $\Delta J_i$ is shown in Fig.~\ref{fig:unilateral}.

All values cluster tightly at $0$, demonstrating that $q^\star$ is an 
$\varepsilon$--Nash equilibrium with $\varepsilon$ numerically indistinguishable from zero. 

\paragraph*{4. Finite--player approximation.}
To assess whether the mean--field equilibrium $q^\star$ provides a good approximation for large but finite populations, we quantify the discrepancy between the empirical average control 
\(
\bar\alpha^N(t) = \frac1N \sum_{i=1}^N \alpha_i(t)
\)
and $q^\star$. For a given number of agents $N\in\{25, 50, 100, 150, 200\}$, we define the finite--$N$ gap as
\[
\mathrm{gap}(N) := \|\bar{\alpha}^N - q^\star\|_2.
\]
Reported uncertainty bands correspond to the standard error of the mean (SEM), computed over multiple random seeds as $\mathrm{SEM} = \sigma / \sqrt{S}$, where $S$ denotes the number of seeds.

Fig.~\ref{fig:finiten} shows that $\|\bar\alpha^N - q^\star\|_2$ decays approximately as a power law in $N$, indicating standard propagation--of--chaos scaling.

\subsection*{Sensitivity experiments}
Quantitative results for all sensitivity experiments are reported in
Tables~\ref{tab:expA}, \ref{tab:expB}, \ref{tab:expC} and~\ref{tab:exp_D}, respectively.

Unless otherwise stated, all metrics are computed at the mean field equilibrium $q^\star$
and averaged over independent runs.
\subsubsection*{Experiment A: effect of the action set}

We now vary the size of the action set:
\[
A_{\text{small}} = [-1,1],\qquad
A_{\text{medium}} = [-1.5,1.5]\ (\text{baseline}),\qquad
A_{\text{large}} = [-10,10].
\]
The results are intuitive:
\begin{itemize}
    \item With $A_{\text{large}}$, the equilibrium flow is smooth and close to linear.
    \item With $A_{\text{medium}}$, we recover the baseline interior solution.
    \item With $A_{\text{small}}$, the system approaches the bang--bang behavior of the frictionless case unless the control cost $\rho$ is sufficiently large.
\end{itemize}

This confirms that the admissible action set plays a nontrivial role when costs 
are weak, but becomes largely irrelevant once $\rho$ is in the baseline range.


\paragraph{Bang--bang saturation.}
When the unconstrained myopic control $a^\star(x)$ falls outside the admissible action set $A=[A_{\min},A_{\max}]$, the discrete argmax policy saturates at the boundary, yielding a bang--bang regime.
To quantify this effect we report the saturation rate
\begin{equation}
\mathrm{sat\_rate}
\;:=\;
\frac{1}{NT}\sum_{i=1}^N\sum_{t=1}^T
\mathbf{1}\!\left\{\alpha_{i,t}\in\{A_{\min},A_{\max}\}\right\},
\label{eq:sat_rate}
\end{equation}
i.e., the fraction of agent--time controls that hit the boundary of $A$.
We observe higher $\mathrm{sat\_rate}$ precisely in configurations where $A$ is tight and/or the cost parameters push $a^\star(x)$ outside $A$ (cf. Experiments~A and~C).



\subsubsection*{Experiment B: market depth (pool size)}

We compare the baseline pool size $X_0=10$ with a deeper pool $X_0 = 100$.  
Because individual trades represent a smaller fraction of the reserves,  
the price impact of each action is attenuated, leading to a 
smaller--magnitude equilibria $q^\star$ and reduced liquidity depletion.
The finite--$N$ gap also decreases faster for the deeper pool, reflecting 
the lower variance of price shocks relative to market depth.



\subsubsection*{Experiment C: effect of cost parameters}

Finally, we vary the three cost components independently.

\paragraph*{Increasing $\lambda$ (running inventory cost).}
Makes agents more reluctant to hold inventory intra--period, producing a 
more negative $q^\star$ and a steeper price path.

\paragraph*{Increasing $\eta_{\mathrm{term}}$ (terminal inventory penalty).}
Shifts the control primarily near the terminal time, resulting in a 
more front--loaded liquidation pattern.

\paragraph*{Increasing $\rho$ (control cost).}
Regularizes the problem most strongly: large $\rho$ suppresses bang--bang 
behavior and produces an interior optimum even in wide action sets.

Overall, these three knobs shape the magnitude, boundary saturation, and stability properties of the equilibrium in predictable ways, providing an economically interpretable map of model behavior.

\subsubsection*{Experiment D: Changes in the terminal holdings target}
Finally, we modify the terminal target $x^*$.

\paragraph{Reversal of the equilibrium direction.}
By changing the traders' terminal target from $x^* = 0$ (liquidation) to $x^* = 15$ (ETH accumulation), the mean field equilibrium exhibits a mirror behavior of the baseline scenario (Figure~\ref{fig:qstar_mirror_xstar}): the aggregate control $q^*(t)$ becomes strictly positive ($q^*(t) \approx 0.28$ for all $t$), and the price trajectory $P(t)$ is strictly increasing. This result confirms that the equilibrium direction---coordinated selling or buying---is not a model artifact but a direct consequence of the incentive structure: when the terminal cost penalizes inventories below $x^*$, agents buy ETH to reach their target, generating buying pressure that the AMM absorbs through a monotone upward price adjustment.

\subsection*{Summary of numerical findings}

The numerical experiments reported in this section lead to the following observations:
\begin{enumerate}
    \item In the absence of regularization, the model admits multiple extreme equilibria, resulting in an underdetermined and economically degenerate regime. Introducing inventory and control costs selects a unique and smooth mean field equilibrium.
    
    \item The computed equilibrium $q^\star$ is locally stable: small perturbations of the aggregate flow generate bounded responses, and unilateral deviations yield negligible individual gains, providing numerical evidence of an $\varepsilon$--Nash equilibrium.
    
    \item Finite--player simulations converge to the mean field solution as the population size increases. The finite--$N$ gap decreases systematically and remains within narrow confidence bands across independent random seeds.
    
    \item Equilibrium trading intensity and aggregate inventory levels respond monotonically to variations in cost parameters and market depth, as reflected by equilibrium norms and liquidity metrics.
    
    \item Sensitivity experiments show that regularization parameters control the qualitative regime of the equilibrium (interior vs.\ boundary saturation), while other parameters primarily rescale equilibrium magnitudes without affecting stability.

    \item The terminal target $x^*$ controls the direction of the equilibrium: changing $x^*$ from liquidation ($x^*=0$) to accumulation ($x^*>x_0$) reverses the sign of $q^*$ and the monotonicity of $P(t)$, confirming that the equilibrium direction is driven by the incentive structure rather than by model artifacts.
\end{enumerate}

\section{Conclusion}

In this paper, we developed a novel application of mean field game (MFG) theory to model price formation in decentralized liquidity pools governed by constant product market makers. Our approach adapts traditional price impact models from centralized order book markets to a decentralized context, capturing the strategic interactions of traders and the unique price discovery mechanisms of automated market makers (AMMs). We established the existence of solutions to the MFG and of approximate Nash equilibria, providing a theoretical foundation for analyzing trader behavior and market dynamics in decentralized finance (DeFi) environments.

The approximate Nash equilibrium results extend classical equilibrium concepts, offering a framework capable of handling realistic settings where exact solutions may be difficult to achieve. These results validate the model's consistency and motivate further research exploring mean field techniques in DeFi.

By linking trader actions to changes in the pool's reserves, the model reflects how individual trades influence the execution price through the AMM's invariant curve. This feature — absent in classical price impact models based on centralized order books — provides a more accurate representation of price dynamics in decentralized markets, and clarifies the incentive structure that drives participation in automated market makers.

In addition to the theoretical analysis, we conducted numerical experiments to test how the system reacts to changes in the reward scaling factor. The simulations show that, once minimal inventory or control costs are introduced, the mean field equilibrium $q^\star$ is uniquely selected and numerically stable. Small perturbations of the aggregate order flow induce controlled responses, and unilateral deviations yield negligible individual gains, providing consistent numerical evidence of an $\varepsilon$--Nash equilibrium.

The numerical experiments also highlight an important structural feature of AMM--based models. In the absence of regularization, the system becomes underdetermined and collapses into extreme trading regimes, reflecting a genuine degeneracy of the frictionless formulation. Regularization therefore plays a fundamental modeling role rather than serving as a numerical convenience. When control constraints are active, the equilibrium may enter a bang--bang regime through boundary saturation, a phenomenon that emerges naturally from constrained optimization and disappears once interior optimal controls are restored.

Sensitivity analyses show that equilibrium properties are primarily governed by control and inventory costs, while variations in action discretization or market depth mainly rescale equilibrium magnitudes without altering qualitative stability. In particular, increasing pool size dampens individual impact but does not destabilize the equilibrium, whereas stronger regularization suppresses extreme trading behavior. Moreover, the terminal inventory target $x^*$ governs the equilibrium direction itself: switching from a liquidation target to an accumulation target reverses the aggregate flow and the price trajectory, demonstrating that the model's qualitative predictions are directly tied to the economic incentives faced by the traders.

Overall, the proposed framework yields stable, interpretable equilibria and clarifies the regimes in which mean field approximations remain reliable for liquidity pool dynamics. Beyond validating the theoretical construction, the numerical results provide a quantitative map of equilibrium behavior in AMMs and underline both the robustness and the limitations of applying classical MFG models to decentralized markets. 

This work represents a first step toward a comprehensive modeling framework for AMM-based markets. One natural extension of this research is the inclusion of transaction costs in the model, an important factor in real-world DeFi protocols. Accounting for trading fees and their impact on agents' strategies is nontrivial and currently under investigation. These extensions aim to provide a more complete and realistic representation of decentralized liquidity markets, opening avenues for future theoretical and empirical studies. Moreover, we plan to enrich the model by incorporating additional economic agents, such as liquidity providers and arbitrageurs, whose strategic behavior is essential to capture the full dynamics of decentralized exchanges.

\section*{Statements and Declarations}

\textbf{Funding.} This work has received funding from the European Union’s Horizon 2020 research and innovation programme under the Marie Skłodowska-Curie, H2020-MSCA-RISE-2017 Project No. 777822,
"Geometric and Harmonic Analysis with Interdisciplinary Applications".

\textbf{Competing Interests.} The authors have no relevant financial or non-financial interests to disclose.

\textbf{Data Availability.} The simulation code and data supporting the findings of this study are available from the corresponding author upon reasonable request.

\textbf{Author Contributions.} All authors contributed to the study conception, model design, and manuscript preparation. The numerical implementation was developed by A. Mu\~noz Gonz\'alez. All authors reviewed and approved the final manuscript.

\appendix
\section{Algorithmic Implementation}
\label{sec:algorithm}

This appendix summarizes the numerical machinery used to compute the MFG
equilibrium flow and to evaluate unilateral deviations.  
The objective is not to describe software details, but to document the
mathematical structure of the fixed–point iteration, the best–response
computation, and the ``frozen policy’’ mechanism used in the $\epsilon$–Nash test.
All notation follows the model formulation in Section~\ref{sec:Model Formulation}.

\subsection*{Simulation loop (agent dynamics).}
For a fixed aggregate path $q(\cdot)$, the simulator evolves the pool and the agents on a time grid $(t_k)_{k=0}^{K}$ as follows:
\begin{enumerate}
    \item each agent selects an action $\alpha_i(t_k)$ according to the chosen policy;
    \item the pool state is updated using the imposed aggregate flow $q(t_k)$;
    \item each agent updates its inventory state $(X_i,Y_i)$ given its own action $\alpha_i(t_k)$ and the current pool price.
\end{enumerate}
The output of one run is the full control history $(\alpha_i(t_k))_{i,k}$ together with the induced mean control $\bar{\alpha}(t_k)=\frac{1}{N}\sum_i \alpha_i(t_k)$, which defines the operator $\mathcal{F}$ used in the fixed point solver.

\subsection*{State variables and controls}

Each trader is characterized by an inventory--cash pair
\[
X^i_t \in \mathbb{R}_+,\qquad Y^i_t \in \mathbb{R},
\]
evolving according to the AMM swap rule (constant–product pool):
\[
x_{t+\Delta t}
    = \max(\varepsilon, x_t - \alpha^i_t\,\Delta t),\qquad
y_{t+\Delta t}
    = \frac{k}{x_{t+\Delta t}},
\]
with price
\(
P_t = k/x_t^2.
\)

The individual control is a bounded liquidation rate
\[
\alpha^i_t \in A \subset \mathbb{R},
\]
where $A$ is a finite grid used for discrete optimization.

The running and terminal costs used in the numerical experiments are:
\[
h(x) = \lambda x^2, \qquad
\ell(x) = \eta_{\mathrm{term}}(x - x^\star)^2,
\qquad 
c(a) = \rho a^2.
\]

\subsection*{Mean field iteration}

Given an aggregate flow path $q(\cdot)$, we define $\mathcal{F}(q)$ as the empirical mean of individual best responses when the AMM is driven by $q$:
\[
\mathcal{F}(q)(t)\;:=\;\frac{1}{N}\sum_{i=1}^N \alpha_i^{\,q}(t),
\]
where $\alpha_i^{\,q}$ denotes the control produced by agent $i$ when the aggregate impact is fixed to $q$ (treated as exogenous).
The mean field equilibrium $q^\star$ is a consistency condition,
\[
q^\star=\mathcal{F}(q^\star),
\]
i.e., the aggregate flow used to price the pool coincides with the average flow generated by the population's best responses.
For a generic input path $q$, the output $\mathcal{F}(q)$ differs from $q$, and the fixed point iteration searches for a self--consistent trajectory.

We compute the mean-field equilibrium as a fixed point of the best-response operator
$q \mapsto \mathcal{F}(q)$.
We initialize with the zero path $q^{(0)}(t)\equiv 0$, which provides a neutral baseline and avoids injecting directional bias into the fixed-point iteration. We iterate
\[
q^{(k+1)} \;=\; (1-\omega)\, q^{(k)} \;+\; \omega\, \mathcal{F}\bigl(q^{(k)}\bigr),
\qquad \omega \in (0,1],
\]
until the stopping criterion
\[
\|q^{(k+1)}-q^{(k)}\| \le \texttt{tol}
\]
is met (or a maximum number of iterations is reached).
Relaxation ($\omega<1$) improves numerical stability when $\mathcal{F}$ is only weakly contractive.

\subsection*{Best response computation}
\label{subsec:bestresponse}

Given a candidate population flow $q(t)$, each trader solves a 
myopic quadratic surrogate of the one–period optimization problem.
For each action $a\in A$ we compute an approximation of the
instantaneous value difference:
\[
Q(a) \approx
\big(Y^i + X^i P_t\big)_{\text{post-swap}(a)}
-
\big(Y^i + X^i P_t\big)_{\text{pre-swap}}
\;-\;
h(X^i_t)\,\Delta t
\;-\;
c(a) \Delta t
\;-\;
\ell'(X^i_t)\,a\,\Delta t,
\]
where the last term is a one–step linearization of the terminal cost.

The trader's policy is then an argmax rule:
\[
\alpha^i_t = \arg\max_{a\in A} Q(a).
\]

The simulation of all traders under this policy yields $\mathcal{F}(q)$.

\subsection*{Frozen–policy mechanism ($\epsilon$–Nash test)}
\label{subsec:frozenpolicy}

To test the unilateral optimality of the equilibrium $q^\star$,  
we store the baseline actions of each agent during the equilibrium 
simulation:
\[
\alpha^{i,\star}_t,\qquad 0\le t\le T.
\]

To evaluate whether trader $i$ can gain by deviating, we re-run the system 
with:
\begin{itemize}
    \item \textbf{Trader $i$ free:} uses a modified policy $\pi_{i}^{\mathrm{dev}}$ obtained by scanning a grid of inventory targets $x^*$. 
    \item \textbf{All other traders $j\neq i$ frozen:} they replay exactly their baseline actions $\alpha^{j,\star}_t$.
\end{itemize}

We denote this composite policy by 
\[
\pi^{\mathrm{frozen}}(j,t) =
\begin{cases}
\alpha^{j,\star}_t, & j\neq i,\\
\pi_{i}^{\mathrm{dev}}(t), & j=i.
\end{cases}
\]

Running the simulator under this policy yields a new objective $J_i^{\mathrm{dev}}$.
The unilateral improvement is
\[
\Delta J_i = J_i^{\mathrm{dev}} - J_i^\star.
\]
\subsection*{Pseudocode summary}

\begin{algorithm}[H]
\caption{Fixed–point solver}
\begin{algorithmic}[1]
\State Initialize $q^{(0)}(t)=0$
\For{$k=0,1,2,\dots$}
    \State Run simulator with external flow $q^{(k)}$ to obtain 
           trader actions $\alpha^{i,(k)}_t$
    \State Compute mean flow 
           $\bar\alpha^{(k)}(t)=\frac1N\sum_i\alpha^{i,(k)}_t$
    \State Update $q^{(k+1)} = \text{Acceleration}(q^{(k)},\bar\alpha^{(k)})$
    \If{$\|q^{(k+1)}-q^{(k)}\|_\infty < \text{tol}$}
        \State \Return $q^\star = q^{(k+1)}$
    \EndIf
\EndFor
\end{algorithmic}
\end{algorithm}

\begin{algorithm}[H]
\caption{$\epsilon$–Nash unilateral deviation test}
\begin{algorithmic}[1]
\State Run equilibrium simulation $\Rightarrow \alpha^{i,\star}_t$, $J_i^\star$
\For{agent $i \in \{1,\dots,N\}$ or a random subset}
    \For{inventory target $x^\star$ in grid}
        \State Construct frozen policy:
        \[
        \pi(j,t)=
        \begin{cases}
        \alpha^{j,\star}_t, & j\neq i,\\
        \pi^{\mathrm{dev}}_{i}(t;x^\star), & j=i
        \end{cases}
        \]
        \State Run simulator $\Rightarrow J_i^{\mathrm{dev}}$
        \State Record $\Delta J_i = J_i^{\mathrm{dev}} - J_i^\star$
    \EndFor
\EndFor
\State Return histogram of $\Delta J_i$
\end{algorithmic}
\end{algorithm}

\subsection*{Parameter summary}

The following parameters are varied across experiments:

\begin{itemize}
    \item \textbf{Population size $N$:} number of agents used to approximate the mean field.
    \item \textbf{Time discretization:} number of steps and $dt$.
    \item \textbf{Action grid $A$:} size and resolution of admissible controls.
    \item \textbf{Market depth:} $(X_0,Y_0)$ initial liquidity of the AMM.
    \item \textbf{Noise:} $\sigma_i$ idiosyncratic heterogeneity.
    \item \textbf{Initial inventory distribution $(x_0^\mu, x_0^\sigma)$:} mean and dispersion of agents’ initial states.
    \item \textbf{Costs:} $(\lambda, \eta_{\mathrm{term}}, \rho)$ controlling running, terminal, and control penalties.
    \item \textbf{Inventory target $x^\star$:} reference level toward which agents may be pulled by terminal incentives.
    \item \textbf{Solver parameters:} relaxation rate, Anderson window, temperature, tolerance.
\end{itemize}

This modular structure allows one to isolate the effect of each modeling component and yields reproducible numerical comparisons.

\clearpage
\section{Figures and Tables}

\begin{figure}[!htbp]
    \centering
    \includegraphics[width=0.85\textwidth]{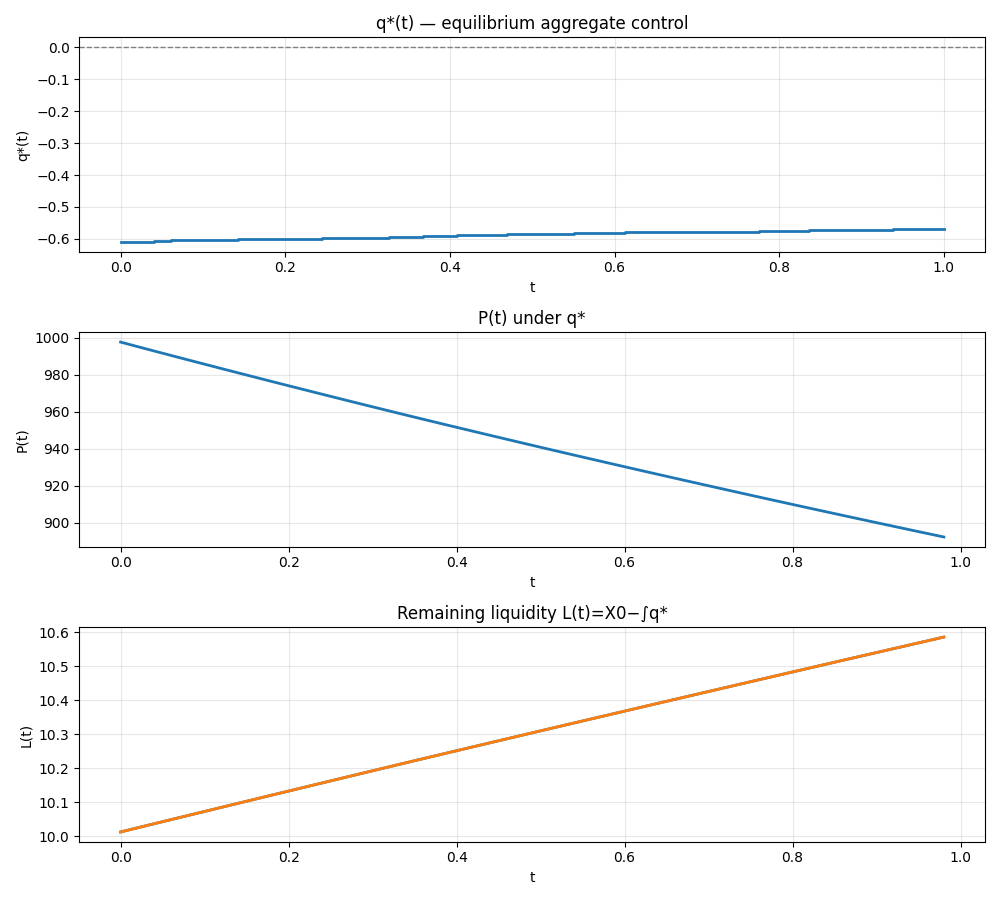}
    \caption{Mean field equilibrium of the baseline scenario ($\lambda=2$, $\eta=0.3$, $\rho=5$, $A=[-1.5, 1.5]$).
    \textbf{Top panel:} aggregate equilibrium control $q^*(t)$, strictly negative for all $t$, indicating sustained ETH selling.
    \textbf{Middle panel:} price trajectory $P(t)$ induced by $q^*$, strictly decreasing from $P_0 \approx 998$ to $P_T \approx 892$.
    \textbf{Bottom panel:} remaining liquidity $L(t) = X_0 - \int_0^t q^*(s)\,ds$, which remains bounded below ($L(t) \geq 10.01$), confirming no pool depletion.}
    \label{fig:qstar_precio_baseline}
\end{figure}

\begin{figure}[!htbp]
    \centering
    \includegraphics[width=0.85\textwidth]{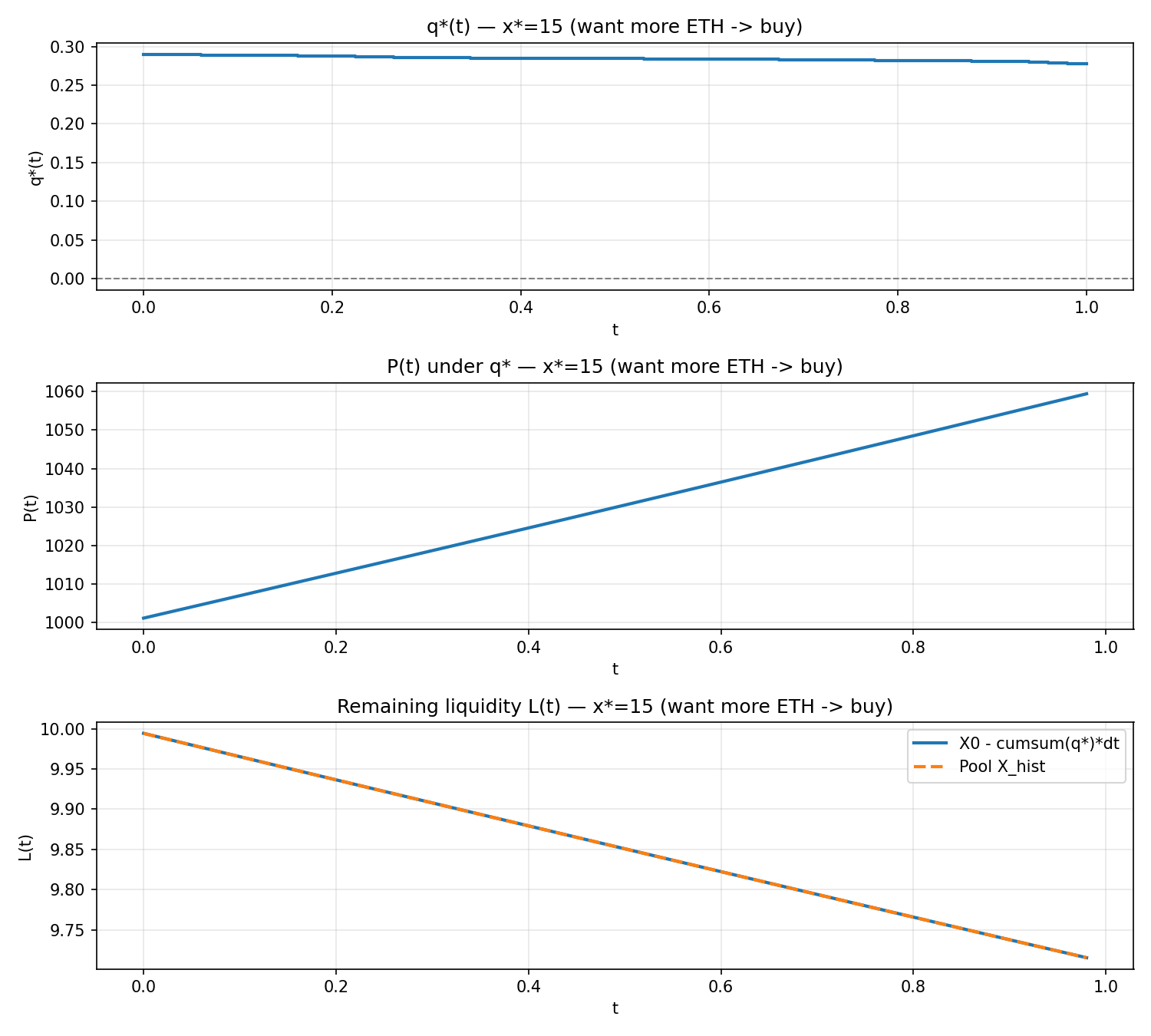}
    \caption{Mean field equilibrium with incentive to accumulate ETH ($x^* = 15 > x_0 = 10$, $\lambda=2$, $\eta=0.3$, $\rho=5$). Compare with Figure~\ref{fig:qstar_precio_baseline}. \textbf{Top panel:} $q^*(t) > 0$ for all $t$, indicating sustained buying. \textbf{Middle panel:} $P(t)$ strictly increasing, from $P_0 \approx 1001$ to $P_T \approx 1059$. \textbf{Bottom panel:} remaining liquidity $L(t)$ decreasing, reflecting the extraction of ETH from the pool by traders.}
    \label{fig:qstar_mirror_xstar}
\end{figure}

\begin{figure}[!htbp]
    \centering
    \includegraphics[width=0.9\linewidth]{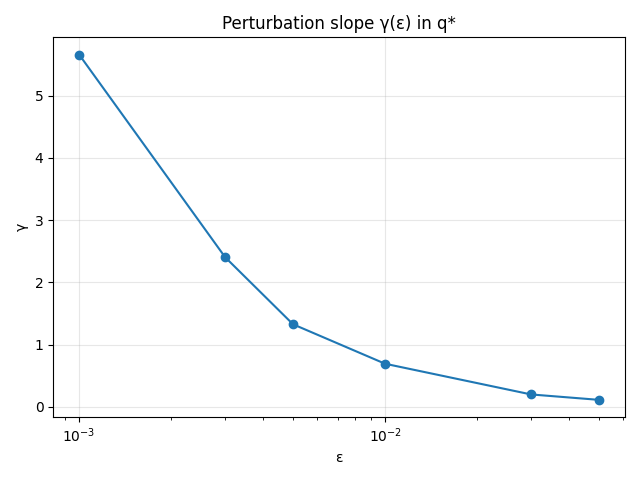}
    \caption{
    Local perturbation sensitivity at the computed fixed point control $q^\star$.
    We report the empirical slope $\gamma(\varepsilon)$ obtained by re-solving the best-response problem under a small perturbation of the mean-field input around $q^\star$.
    Smaller values indicate a more locally contractive response map and hence a more stable fixed point in practice.
    }
    \label{fig:perturb}
\end{figure}

\begin{figure}[!htbp]
    \centering
    \includegraphics[width=0.9\linewidth]{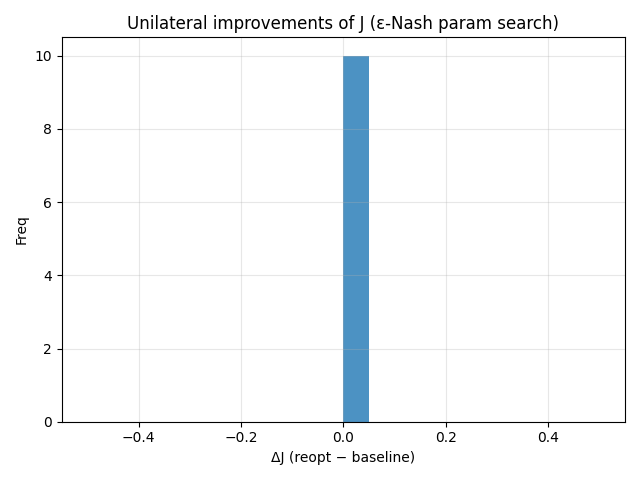}
    \caption{
    Unilateral deviation check at $q^\star$ (frozen-policy $\varepsilon$-Nash test).
    For each agent $i$, we keep all other agents on their baseline trajectories and re-optimize only agent $i$; the histogram shows $\Delta J_i := J_i(\text{deviate})-J_i(\text{baseline})$.
    Values concentrated near $0$ indicate no profitable unilateral deviations at the tested resolution.
    }
    \label{fig:unilateral}
\end{figure}

\begin{figure}[!htbp]
    \centering
    \includegraphics[width=0.9\linewidth]{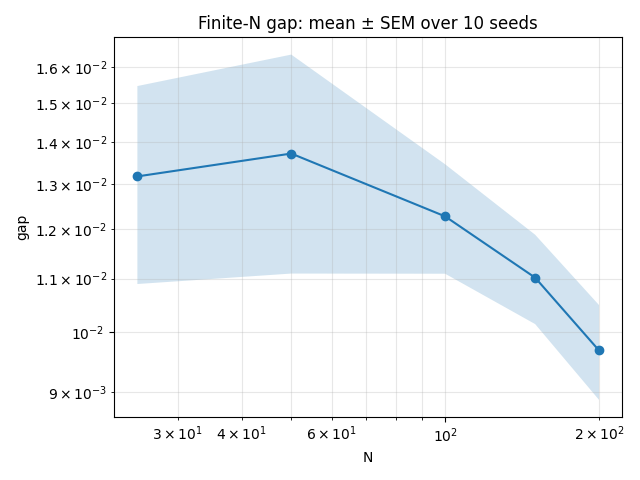}
    \caption{
    Finite-$N$ gap between the empirical mean control $\bar{\alpha}^N$ and the mean-field fixed point $q^\star$.
    For each $N$ we report the mean gap over multiple random seeds together with an uncertainty band (mean $\pm$ SEM).
    A decreasing trend supports convergence of the finite-player system toward the mean-field solution.
    }
    \label{fig:finiten}
\end{figure}

\clearpage
\begin{obs*}
    All numerical metrics exhibit mild run-to-run variability due to stochastic initialization, finite discretization, and numerical tolerances.
    Reported values should therefore be interpreted as representative magnitudes rather than exact quantities.
    All qualitative conclusions and comparative trends reported in this section are robust across independent runs.    
\end{obs*}

\begin{table}[H]
\centering
\caption{Baseline scenario: equilibrium magnitude, stability indicators, and finite-player deviations across action set sizes.}
\label{tab:baseline}
\begin{tabular}{lcccc}
\hline
Config & 
$\|q^\ast\|_2$ &
$\overline{\gamma}$ &
gap$(N_{\max})$ &
sat\_rate
\\
\hline
$A=[-1.5,1.5]$ & $0.59$ & $1.65$ & $8.5457e-03$ & $0.0$ \\
\hline
\end{tabular}
\end{table}

\begin{table}[H]
\centering
\caption{Experiment 0 (frictionless degeneracy): equilibrium selection depends on admissible set and numerical asymmetries.}
\label{tab:exp0}
\begin{tabular}{lcccc}
\toprule
Action set $A$ & $\mathrm{mid}(A)$ & $\|q^*\|_2$ & $\overline{\gamma}$ & $gap(N_{\max})$ \\
\midrule
$[-1.5,1.5]$    & $0$   & $0.10$ & $365.5$     & $8.97\times 10^{-1}$ \\
$[-3,3]$        & $0$   & $0.20$ & $744.4$     & $1.03\times 10^{0}$ \\
$[1,3]$         & $2$   & $2.04$ & $948.8$ & $3.22\times 10^{-2}$ \\
\bottomrule
\end{tabular}
\end{table}

\begin{table}[H]
\centering
\caption{Experiment A: Effect of the action set.}
\label{tab:expA}
\begin{tabular}{lcccc}
\toprule
Config & 
$\|q^\ast\|_2$ &
$\overline{\gamma}$ &
gap$(N_{\max})$ &
sat\_rate
\\
\midrule
$A=[-0.5,0.5]$ & $0.5$ & $0.20$ & $8.0681e-04$  & $1.0$ \\
$A=[-1.5,1.5]$ & $0.59$ & $1.65$ & $8.5457e-03$ & $0.0$ \\
$A=[-10,10]$ & $0.66$ & $3.36$ & $6.1986e-03$  & $0.0$ \\
\bottomrule
\end{tabular}%
\end{table}

\begin{table}[H]
\centering
\caption{Experiment B: Market depth (pool size).}
\label{tab:expB}
\begin{tabular}{lcc}
\toprule
Config &
$\min_t L(t)$ &
gap$(N_{\max})$
\\
\midrule
$X_0=10, Y_0=10000.0$ & $10.01$ & $8.5457e-03$  \\
$X_0=100, Y_0=100000.0$ & $100.01$ & $8.5457e-03$ \\
\bottomrule
\end{tabular}
\end{table}

\begin{table}[H]
\centering
\caption{Experiment C: Effect of cost parameters.}
\label{tab:expC}
\begin{tabular}{lccc}
\toprule
Config &
$\|q^\ast\|_2$ &
$\overline{\gamma}$ &
sat\_rate
\\
\midrule
$\lambda,\eta,\rho=2, 0.3, 5$ & $0.59$ & $1.65$ & $0.0$ \\
$\lambda,\eta,\rho=2, 0.3, 0.5$ & $1.5$ & $0.0$ & $1.0$\\
$\lambda,\eta,\rho=2, 3, 5$ & $1.5$ & $0.0$ & $1.0$\\
$\lambda,\eta,\rho=10, 0.3, 5$ & $0.59$ & $1.65$ & $0.0$\\
\bottomrule
\end{tabular}%
\end{table}

\begin{table}[H]
\centering
\caption{Experiment D: effect of the terminal target $x^*$ on the equilibrium.}
\label{tab:exp_D}
\begin{tabular}{lccc}
\hline
Config & $\|q^*\|_2$ & $q^*$ sign & $P_0 \to P_T$ \\
\hline
$x^* = 0$ (baseline) & $0.59$ & $< 0$ (sell) & $998 \to 892$ \\
$x^* = 15$ & $0.28$ & $> 0$ (buy) & $1001 \to 1059$ \\
\hline
\end{tabular}
\end{table}

\textbf{Notes.}
$\overline{\gamma}$ denotes the mean perturbation slope averaged over $\epsilon\in\{10^{-3}, 3*10^{-3}, 5*10^{-3}, 10^{-2}, 3*10^{-2}, 5*10^{-2}\}$; see Section \ref{sec:baseline}, Stability to perturbations. 

|$\|q^*\|_2$ is the discrete $L^2$ norm of the equilibrium flow trajectory over $[0,T]$; since $q^*$ is a time-dependent path, this norm captures its aggregate magnitude across the full horizon, complementing the pointwise evolution shown in the Figures \ref{fig:qstar_precio_baseline} and \ref{fig:qstar_mirror_xstar}.

\bibliographystyle{plainnat}
\bibliography{main}
\end{document}